\newtheorem{Theorem}{Theorem}
\newtheorem{Cor}[Theorem]{Corollary}
\newtheorem{Lemma}[Theorem]{Lemma}
\newtheorem{Proposition}[Theorem]{Proposition}
\newtheorem{Ex}[Theorem]{Example}
\theoremstyle{Remark}
\newtheorem{Remark}[Theorem]{Remark}
\theoremstyle{Definition}
\newtheorem{Definition}[Theorem]{Definition}
\begin{document}

\title{Generalizing the variational theory on time scales\\
to include the delta indefinite integral\thanks{Submitted 25-Jul-2010;
revised 27-Nov-2010; accepted 16-Feb-2011;
for publication in \emph{Computers \& Mathematics with Applications}.}}

\author{Nat\'{a}lia Martins\\
\texttt{natalia@ua.pt}
\and Delfim F. M. Torres\thanks{Corresponding author.}\\
\texttt{delfim@ua.pt}}

\date{Department of Mathematics\\
University of Aveiro\\
3810-193 Aveiro, Portugal}


\maketitle

\begin{abstract}
We prove necessary optimality conditions of Euler--Lagrange type
for generalized problems of the calculus of variations
on time scales with a Lagrangian depending not only
on the independent variable, an unknown function and its delta derivative,
but also on a delta indefinite integral that depends on the unknown function.
Such kind of variational problems were considered by Euler himself
and have been recently investigated in
[Methods Appl. Anal. 15 (2008), no.~4, 427--435].
Our results not only provide a generalization to previous results,
but also give some other interesting optimality conditions as special cases.

\bigskip

\noindent \textbf{Keywords:} time scales, calculus of variations,
Euler--Lagrange equations, isoperimetric problems, natural boundary conditions.

\bigskip

\noindent \textbf{Mathematics Subject Classification 2010:}
49K15, 34N05, 39A12.
\end{abstract}


\section{Introduction}

In what follows, $\mathbb{T}$ denotes a time scale
with operators $\sigma$, $\rho$,
$\mu$, $\nu$, $\Delta$, and $\nabla$
\cite{Bohner:Peterson,Bohner-Peterson2}.
We also assume that there exist at least three points
on the time scale: $a,b,s \in \mathbb{T}$ with $a<b<s$, and
that the operator $\sigma$ is delta differentiable.
The main purpose of this paper is to generalize
the Calculus of Variations  on time scales
(see \cite{Bohner,EJC:ZND,MyID:140,MyID:175,MyID:170,MyID:183}
and references therein) by considering the variational problem
\begin{equation}
\label{functional}
\mathcal{L}\left(y\right)
= \int_{a}^{b} L\left(t,y^{\sigma}(t), y^\Delta(t), z(t) \right) \Delta t
\longrightarrow \mathrm{extr},
\end{equation}
where ``extr'' denotes ``extremize'' (\textrm{i.e.}, minimize or maximize)
and the variable $z$ in the integrand is itself expressed
in terms of an indefinite integral
$$
z(t)=\int_{a}^{t} g\left(
\tau,y^{\sigma}(\tau), y^\Delta(\tau) \right) \Delta \tau.
$$
In Subsection~\ref{subsection E-L} we obtain the Euler--Lagrange
equation for problem (\ref{functional}) in the class of functions
$y \in C^1_{rd}(\mathbb{T},\mathbb{R})$ satisfying the boundary conditions
\begin{equation}
\label{boundary conditions}
y(a)=\alpha \quad \mbox{ and } \quad y(b)=\beta
\end{equation}
for some fixed $\alpha, \beta \in \mathbb{R}$
(\textrm{cf.} Theorem~\ref{E-L}).
Accordingly to Fraser \cite{Fraser}, the ideia of generalizing
the basic problem of the calculus of variations by considering
a variational integral depending also on an indefinite integral
(in the classical setting, that is, when $\mathbb{T}=\mathbb{R}$)
was first considered by Euler in 1741. Our Euler--Lagrange equation
is a generalization of the Euler--Lagrange equations obtained by Euler
\cite[Eq. (8)]{Fraser}, Bohner \cite{Bohner}, and Gregory \cite{J:Gregory}.
The transversality conditions for problem (\ref{functional})
are obtained in Subsection~\ref{subsection natural boundary}.
In Subsection~\ref{subsectin isoperimetric} we prove a necessary
optimality condition for the isoperimetric problem: problem
(\ref{functional})--(\ref{boundary conditions})
subject to the delta integral constraint
$$
\mathcal{J}\left(y\right)
= \int_{a}^{b} F\left(t,y^{\sigma}(t), y^\Delta(t), z(t) \right) \Delta t
= \gamma
$$
for some given $\gamma \in \mathbb{R}$.
In Subsection~\ref{duality} we explain how it is possible to prove
backward versions of our results by means of Caputo's duality \cite{Caputo}
(see also \cite{comBasia:duality}). Finally, in  Section~\ref{subsection applications}
we provide some applications of our main results.


\section{Preliminaries}

For definitions, notations and results concerning the theory of time scales
we refer the readers to the comprehensive books \cite{Bohner:Peterson,Bohner-Peterson2}.
All the intervals in this paper are time scale intervals. Throughout the text we denote
by $\partial_i f$ the partial derivative of a function $f$ with respect to its $i$th argument.

We assume that
\begin{enumerate}
\item the admissible functions $y$ belong to the class $C^{1}_{rd}(\mathbb{T},\mathbb{R})$;
\item $(t,y,v,z) \rightarrow L(t,y,v,z)$ \ and  \ $(t,y,v,z) \rightarrow F(t,y,v,z)$
have continuous partial derivatives with respect to $y,v,z$ for all $t \in [a,b]$;
\item $(t,y,v) \rightarrow g(t,y,v)$ has continuous partial
derivatives with respect to $y,v$ for all $t \in [a,b]$;
\item $t \rightarrow  L(t, y^\sigma(t), y^\Delta(t),z(t))$ and
$t \rightarrow F(t, y^\sigma(t), y^\Delta(t),z(t))$ belong to the class
$C_{rd}(\mathbb{T},\mathbb{R})$ for any admissible function $y$;
\item $t \rightarrow  \partial_3 L(t, y^\sigma(t), y^\Delta(t),z(t))$,
$t \rightarrow  \partial_3 F(t, y^\sigma(t), y^\Delta(t),z(t))$
and $t \rightarrow \partial_3 g(t, y^\sigma(t), y^\Delta(t))$
belong to the class $C^{1}_{rd}(\mathbb{T},\mathbb{R})$
for any admissible function $y$.
\end{enumerate}

\begin{Definition}
\label{definicao de extremizer}
An admissible function $y_\ast \in  C^{1}_{rd}\left(\mathbb{T}, \mathbb{R}\right)$
is said to be a local minimizer (resp. local maximizer) to  problem
(\ref{functional})--(\ref{boundary conditions}) if there exists $\delta >0$ such that
$\mathcal{L}(y_\ast) \leq \mathcal{L}(y)$ (resp. $\mathcal{L}(y_\ast) \geq \mathcal{L}(y)$)
for all admissible $y$ satisfying the boundary conditions (\ref{boundary conditions})
and $\parallel y - y_\ast \parallel < \delta$, where
$$
\parallel y \parallel = \sup_{t \in [a,b]^\kappa}\mid y^\sigma(t)\mid
+ \sup_{t \in [a,b]^\kappa}\mid y^\Delta(t)\mid.
$$
\end{Definition}

\begin{Definition}
We say that $\eta\in C^{1}_{rd}\left(\mathbb{T},\mathbb{R}\right)$
is an admissible variation to problem
(\ref{functional})--(\ref{boundary conditions})
provided $\eta\left(  a\right)  =\eta\left(  b\right)=0$.
\end{Definition}

The following result, known as the \emph{fundamental lemma
of the calculus of variations on time scales},
is an important tool in the proofs of our main results.
The proof of Lemma~\ref{Fund. Lemma}
follows immediately from \cite[Theorem~15]{MartinsTorres}
and the duality arguments of Caputo \cite{Caputo}.

\begin{Lemma}
\label{Fund. Lemma}
Let $f \in C_{rd}([a,b],\mathbb{R})$. Then
$$
\int_a^b f(t) \eta^\sigma(t)\Delta t =0 \quad
\mbox{ for all }  \quad \eta \in C_{rd}([a,b],\mathbb{R})
\quad \mbox{ with } \quad  \eta(a)=\eta(b)=0
$$
if and only if $f(t)=0$ for all $t \in [a,b]^\kappa$.
\end{Lemma}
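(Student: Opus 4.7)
The ``if'' direction is trivial: if $f(t)=0$ for every $t\in[a,b]^\kappa$, then the integrand $f(t)\eta^\sigma(t)$ vanishes $\Delta$-almost everywhere and the integral is zero. All the work sits in the ``only if'' direction.

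My plan for the forward implication is to follow the hint given in the text, namely to derive the statement from Theorem~15 of \cite{MartinsTorres} via Caputo's duality. The companion MartinsTorres fundamental lemma is formulated without the $\sigma$ shift, typically in the nabla setting: if $\int_a^b h(t)\eta(t)\nabla t=0$ for every admissible $\eta$ vanishing at the endpoints, then $h\equiv 0$ on $[a,b]_\kappa$. Caputo's duality associates to the time scale $\mathbb{T}$ its reflection $\mathbb{T}^*=\{-t : t\in\mathbb{T}\}$, swapping $\sigma\leftrightarrow\rho$, $\Delta\leftrightarrow\nabla$, and left/right density. Under this correspondence, an rd-continuous function on $\mathbb{T}$ becomes an ld-continuous function on $\mathbb{T}^*$, the integral $\int_a^b f(t)\eta^\sigma(t)\Delta t$ on $\mathbb{T}$ transforms into $\int_{-b}^{-a}\tilde f(t)\tilde\eta(t)\nabla t$ on $\mathbb{T}^*$ (the $\sigma$ on one side becomes the plain evaluation on the other), and boundary conditions $\eta(a)=\eta(b)=0$ translate to $\tilde\eta(-b)=\tilde\eta(-a)=0$. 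Applying the cited lemma on $\mathbb{T}^*$ yields $\tilde f\equiv 0$ on $[-b,-a]_\kappa$, which pulls back to $f\equiv 0$ on $[a,b]^\kappa$.

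If one preferred a direct argument instead of invoking duality, the standard route is proof by contradiction. Assume $f(t_0)\neq 0$ at some $t_0\in[a,b]^\kappa$; without loss of generality $f(t_0)>0$. Split into two cases according to whether $t_0$ is right-scattered or right-dense. In the right-scattered case, one can choose $\eta$ to be the rd-continuous function that equals $1$ at $\sigma(t_0)$ and $0$ elsewhere on the time scale, arranging that $\eta^\sigma$ isolates the single point $t_0$ with nonzero weight $\mu(t_0)f(t_0)>0$ while respecting $\eta(a)=\eta(b)=0$. In the right-dense case, rd-continuity of $f$ yields a neighborhood $[t_0,t_0+\delta)_{\mathbb{T}}$ on which $f$ stays positive, and one constructs a non-negative rd-continuous bump $\eta$ supported in that neighborhood (and inside $(a,b)$) to obtain $\int_a^b f\eta^\sigma\Delta t>0$, a contradiction.

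The only real obstacle is the case analysis around the topological subtleties of $\mathbb{T}$: ensuring that the chosen test function $\eta$ genuinely lies in $C_{rd}([a,b],\mathbb{R})$, satisfies the endpoint conditions, and that the shift $\eta^\sigma$ picks up the right points of the time scale. This is precisely what Caputo's duality packages for us, which is why the cited short proof is cleaner than a direct construction.
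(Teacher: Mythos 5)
Your main route --- deducing the lemma from Theorem~15 of \cite{MartinsTorres} via the duality arguments of \cite{Caputo} --- is exactly what the paper does; its entire proof is that one-sentence citation, so on the primary argument you and the authors agree. Two points deserve correction, though. First, your description of the duality bookkeeping is wrong: under the reflection $t\mapsto -t$ the backward jump operator of $\mathbb{T}^*=\{-t:t\in\mathbb{T}\}$ satisfies $\hat\rho(s)=-\sigma(-s)$, so the composition $\eta^\sigma$ on $\mathbb{T}$ dualizes to the \emph{$\rho$-shifted} test function $\tilde\eta^{\hat\rho}$ on $\mathbb{T}^*$, not to the ``plain evaluation'' as you claim. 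This is precisely why the nabla fundamental lemma of \cite{MartinsTorres} carries $\eta^\rho$ under the integral sign, and it is that shifted form --- not an unshifted one --- that matches the dual of the present statement; the conclusion survives, but the transformation as you state it would not line up with the theorem you are invoking. Second, your fallback direct argument has a genuine flaw in the right-scattered case: the function equal to $1$ at $\sigma(t_0)$ and $0$ elsewhere is not rd-continuous when $\sigma(t_0)$ is right-dense, and when $\sigma(t_0)=b$ it violates the requirement $\eta(b)=0$; in that last situation no admissible $\eta$ makes $\eta^\sigma(t_0)\neq 0$, so the point $t_0=\rho(b)$ is invisible to this construction altogether. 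That is not a technicality to be waved away --- it is the one genuinely delicate point of the $\sigma$-shifted fundamental lemma, and any self-contained proof must confront it explicitly rather than defer it to ``topological subtleties''.
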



\section{Main results}

In order to simplify expressions, we  introduce two operators,
$[\cdot]$ and $\{\cdot\}$, defined in the following way:
$$
[y](t) :=(t, y^\sigma(t), y^\Delta(t), z(t))
\quad \mbox{and} \quad
\{y\}(t) :=(t, y^\sigma(t), y^\Delta(t)),
$$
where $y \in C^{1}_{rd}(\mathbb{T},\mathbb{R})$.


\subsection{Euler--Lagrange equation}
\label{subsection E-L}

\begin{Theorem}[Necessary optimality condition
to (\ref{functional})--(\ref{boundary conditions})]
\label{E-L}
Suppose that $y_{\ast}$ is  a local minimizer
or local maximizer to problem
(\ref{functional})--(\ref{boundary conditions}).
Then $y_{\ast}$ satisfies the Euler--Lagrange equation
\begin{equation}
\label{E-L equation}\displaystyle \partial_{2}L [y](t)
- \frac{\Delta}{\Delta t}\partial_{3} L [y](t)
+ \displaystyle \partial_2 g\{y\}(t)
\cdot \int_{\sigma(t)}^b \partial_4 L [y](\tau)\Delta\tau
- \displaystyle \frac{\Delta}{\Delta t}\left(\partial_3 g\{y\}(t)
\cdot \int_{\sigma(t)}^b \partial_4 L [y](\tau)\Delta\tau\right)=0
\end{equation}
for all $t\in\left[a,b\right]^\kappa$.
\end{Theorem}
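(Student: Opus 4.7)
The plan is to use the standard variational technique adapted to time scales. I introduce a one-parameter family of admissible competitors $y=y_\ast+\varepsilon\eta$ where $\eta\in C^1_{rd}(\mathbb{T},\mathbb{R})$ satisfies $\eta(a)=\eta(b)=0$, and compute the first variation $\frac{d}{d\varepsilon}\mathcal{L}(y_\ast+\varepsilon\eta)\big|_{\varepsilon=0}$, which must vanish at the extremizer.

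The first step is bookkeeping for the variation of the nested integral. Writing $z_\varepsilon(t)=\int_a^t g\{y_\ast+\varepsilon\eta\}(\tau)\Delta\tau$ and differentiating under the integral (justified by the smoothness assumptions 2--4 above),
\[
\frac{\partial z_\varepsilon}{\partial\varepsilon}\bigg|_{\varepsilon=0}(t)
=\int_a^t\bigl[\partial_2 g\{y_\ast\}(\tau)\,\eta^\sigma(\tau)
+\partial_3 g\{y_\ast\}(\tau)\,\eta^\Delta(\tau)\bigr]\Delta\tau.
\]
Applying the chain rule to $L[y_\ast+\varepsilon\eta]$ then yields
\[
0=\int_a^b\!\Bigl(\partial_2 L[y_\ast]\eta^\sigma
+\partial_3 L[y_\ast]\eta^\Delta\Bigr)\Delta t
+\int_a^b\!\partial_4 L[y_\ast](t)\!\int_a^t\!\bigl(\partial_2 g\{y_\ast\}\eta^\sigma
+\partial_3 g\{y_\ast\}\eta^\Delta\bigr)(\tau)\,\Delta\tau\,\Delta t.
\]

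The key step is to interchange the order of integration in the double integral. Using the Fubini-type identity for the delta integral on time scales, the region $\{a\le\tau\le t\le b\}$ is re-expressed as $\{\sigma(\tau)\le t\le b,\ a\le\tau\le b\}$, producing
\[
\int_a^b\bigl(\partial_2 g\{y_\ast\}(\tau)\eta^\sigma(\tau)
+\partial_3 g\{y_\ast\}(\tau)\eta^\Delta(\tau)\bigr)
\Bigl(\int_{\sigma(\tau)}^b\partial_4 L[y_\ast](t)\,\Delta t\Bigr)\Delta\tau.
\]
Setting $A(\tau):=\int_{\sigma(\tau)}^b\partial_4 L[y_\ast](t)\,\Delta t$ and collecting the $\eta^\sigma$ and $\eta^\Delta$ coefficients, the vanishing first variation reads
\[
\int_a^b\Bigl[\bigl(\partial_2L[y_\ast]+\partial_2 g\{y_\ast\}\cdot A\bigr)\eta^\sigma
+\bigl(\partial_3L[y_\ast]+\partial_3 g\{y_\ast\}\cdot A\bigr)\eta^\Delta\Bigr]\Delta t=0.
\]

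Next I apply delta integration by parts to the $\eta^\Delta$ term; by hypothesis 5 both $\partial_3L[y_\ast]$ and $\partial_3 g\{y_\ast\}\cdot A$ are delta differentiable, and the boundary contribution vanishes because $\eta(a)=\eta(b)=0$. This converts the condition into
\[
\int_a^b\Bigl[\partial_2L[y_\ast]+\partial_2 g\{y_\ast\}\cdot A
-\tfrac{\Delta}{\Delta t}\bigl(\partial_3L[y_\ast]+\partial_3 g\{y_\ast\}\cdot A\bigr)\Bigr]\eta^\sigma\,\Delta t=0
\]
for every admissible variation $\eta$. Finally, Lemma~\ref{Fund. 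Lemma} forces the bracketed expression to vanish pointwise on $[a,b]^\kappa$, which is precisely \eqref{E-L equation}.

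The main obstacle is the Fubini-type interchange: on a general time scale one must verify carefully that $\int_a^b\int_a^t h(t,\tau)\Delta\tau\,\Delta t=\int_a^b\int_{\sigma(\tau)}^b h(t,\tau)\Delta t\,\Delta\tau$, since the appearance of $\sigma(\tau)$ on the inner lower limit is exactly what produces the $\sigma(t)$ visible in the statement of the theorem. All other steps are either routine chain-rule differentiation, standard delta integration by parts, or an appeal to the already stated fundamental lemma.
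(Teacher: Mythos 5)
Your proposal is correct and follows the same overall strategy as the paper: form $\phi(\varepsilon)=\mathcal{L}(y_\ast+\varepsilon\eta)$, compute $\phi'(0)$, rewrite the double-integral term so that everything multiplies $\eta^\sigma$, integrate the $\eta^\Delta$ terms by parts using $\eta(a)=\eta(b)=0$, and finish with Lemma~\ref{Fund. Lemma}. The one genuine difference is the mechanism you use for the central rearrangement. You invoke a Fubini-type interchange
$$
\int_a^b\int_a^t h(t,\tau)\,\Delta\tau\,\Delta t=\int_a^b\int_{\sigma(\tau)}^b h(t,\tau)\,\Delta t\,\Delta\tau,
$$
and correctly flag it as the step requiring justification on a general time scale. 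The paper sidesteps Fubini entirely: it applies delta integration by parts with $u(t)=\int_b^t\partial_4 L[y_\ast](\tau)\,\Delta\tau$ and $v(t)=\int_a^t\partial_2 g\{y_\ast\}(\tau)\eta^\sigma(\tau)\,\Delta\tau$ (and similarly for the $\partial_3 g$ term), so that the formula $\int_a^b u^\Delta v\,\Delta t=[uv]_a^b-\int_a^b u^\sigma v^\Delta\,\Delta t$ produces the factor $\int_b^{\sigma(t)}\partial_4 L[y_\ast](\tau)\,\Delta\tau$ directly, the boundary term vanishing because $u(b)=0$ and $v(a)=0$. The two routes yield identical intermediate expressions; the paper's version requires only the existence and delta differentiability of the indefinite integrals (guaranteed by rd-continuity of the integrands), whereas yours additionally needs the interchange identity, which is true but is exactly the burden you would have to discharge. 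Your variant is slightly more economical afterwards, since you collect the $\eta^\sigma$ and $\eta^\Delta$ coefficients and perform a single final integration by parts where the paper performs two.
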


\begin{proof}
Suppose that $y_{\ast}$ is a local minimizer (resp. maximizer) to problem
(\ref{functional})--(\ref{boundary conditions}).
Let $\eta$ be an admissible variation and define the function
$\phi:\mathbb{R}  \rightarrow \mathbb{R}$
by $\phi\left(  \epsilon\right)  :=\mathcal{L}(y_{\ast}+\epsilon \eta)$.
It is clear that a necessary condition for $y_{\ast}$
to be an extremizer is given by $\phi^{\prime}\left(  0\right)  =0$.
Note that
\begin{multline*}
\phi^\prime (0)
=  \displaystyle \int_a^b \Bigl( \partial_{2}L[y_\ast](t)\eta^\sigma(t)
+ \partial_{3}L [y_\ast](t) \eta^\Delta(t) \\
+  \displaystyle \partial_{4}L [y_\ast](t)
\cdot \int_a^t \left(\partial_2 g\{y_\ast\}(\tau) \eta^\sigma(\tau)
+ \partial_3 g\{y_\ast\}(\tau) \eta^\Delta(\tau)\right) \Delta \tau \Bigr) \Delta t.
\end{multline*}
Using the integration by parts formula, we obtain
$$
\displaystyle \int_a^b \partial_{3}L [y_\ast](t) \eta^\Delta(t) \Delta t
= \Bigl[\partial_{3}L [y_\ast](t)\eta(t)\Bigr]_a^b
- \int_a^b \frac{\Delta}{\Delta t}\partial_{3}L [y_\ast](t)\eta^\sigma(t) \Delta t,
$$
\begin{equation*}
\begin{split}
\int_a^b & \left(\partial_{4}L [y_\ast](t) \cdot
\int_a^t \partial_2 g\{y_\ast\}(\tau) \eta^\sigma(\tau) \Delta \tau \right)\Delta t  \\
&=   \displaystyle \left[\int_b^t \partial_{4}L [y_\ast](\tau) \Delta \tau
\cdot \int_a^t \partial_2 g\{y_\ast\}(\tau) \eta^\sigma(\tau) \Delta \tau \right ]_a^b
- \displaystyle \int_a^b \left( \int_b^{\sigma(t)} \partial_{4}L[y_\ast](\tau) \Delta \tau
\cdot \partial_2 g\{y_\ast\}(t) \eta^\sigma(t)\right)\Delta t\\
&= -
\displaystyle \int_a^b \left( \partial_2 g\{y_\ast\}(t) \cdot
\int_b^{\sigma(t)} \partial_{4}L [y_\ast](\tau) \Delta \tau \right) \eta^\sigma(t) \Delta t,
\end{split}
\end{equation*}
and
\begin{equation*}
\begin{split}
\int_a^b &\left(\partial_{4}L [y_\ast](t) \cdot
\int_a^t \partial_3 g\{y_\ast\}(\tau) \eta^\Delta(\tau) \Delta \tau \right)\Delta t \\
&= \displaystyle \left[\int_b^t \partial_{4}L [y_\ast](\tau) \Delta \tau \cdot
\int_a^t \partial_3 g\{y_\ast\}(\tau) \eta^\Delta(\tau) \Delta \tau \right ]_a^b
- \displaystyle \int_a^b \left( \int_b^{\sigma(t)} \partial_{4}L[y_\ast](\tau) \Delta \tau
\cdot \partial_3 g\{y_\ast\}(t) \eta^\Delta(t)\right) \Delta t\\
&= - \displaystyle \int_a^b \left( \partial_3 g\{y_\ast\}(t) \cdot
\int_b^{\sigma(t)} \partial_{4}L [y_\ast](\tau) \Delta \tau \right) \eta^\Delta(t) \Delta t.
\end{split}
\end{equation*}
Using again integration by parts in the last integral we obtain
\begin{multline*}
\int_a^b \left( \partial_3 g\{y_\ast\}(t) \cdot
\int_b^{\sigma(t)} \partial_{4}L [y_\ast](\tau) \Delta \tau \right) \eta^\Delta(t) \Delta t\\
=\left[  \partial_3 g\{y_\ast\}(t) \cdot \int_b^{\sigma(t)}
\partial_{4}L [y_\ast](\tau) \Delta \tau  \cdot \eta(t) \right ]_a^b
- \displaystyle \int_a^b \frac{\Delta}{\Delta t}\left( \partial_3 g\{y_\ast\}(t)
\cdot \int_b^{\sigma(t)} \partial_{4}L [y_\ast](\tau) \Delta \tau \right) \eta^\sigma(t) \Delta t.
\end{multline*}
Since $\eta(a)=\eta(b)=0$, then
\begin{multline*}
\phi^\prime(0) = \int_a^b \Bigl( \partial_{2}L[y_\ast](t)
- \frac{\Delta}{\Delta t}\partial_{3}L[y_\ast](t)
- \displaystyle \partial_2 g\{y_\ast\}(t) \cdot
\int_b^{\sigma(t)} \partial_{4}L [y_\ast](\tau) \Delta \tau \\
+ \frac{\Delta}{\Delta t}( \partial_3 g\{y_\ast\}(t)
\cdot \int_b^{\sigma(t)} \partial_{4}L [y_\ast](\tau) \Delta \tau )\Bigr) \eta^\sigma (t) \Delta t.
\end{multline*}
From the optimality condition $\phi^\prime(0) =0$ we conclude,
by the fundamental lemma of the calculus of variations on time scales
(Lemma~\ref{Fund. Lemma}), that
$$
\displaystyle \partial_{2}L[y_\ast](t)
- \frac{\Delta}{\Delta t}\partial_{3}L[y_\ast](t)
- \displaystyle \partial_2 g\{y_\ast\}(t) \cdot
\int_b^{\sigma(t)} \partial_{4}L [y_\ast](\tau) \Delta \tau
+ \displaystyle \frac{\Delta}{\Delta t}\left( \partial_3 g\{y_\ast\}(t)
\cdot \int_b^{\sigma(t)} \partial_{4}L [y_\ast](\tau) \Delta \tau \right) =0
$$
for all $t\in\left[  a,b\right]^\kappa$, proving the desired result.
\end{proof}

\begin{Remark}
Note that
\begin{enumerate}
\item The Euler--Lagrange equation  (\ref{E-L equation})
is a generalization of the Euler--Lagrange equation obtained
by Euler in 1741 (if $\mathbb{T}=\mathbb{R}$,
we obtain equation (8) of \cite{Fraser}).

\item Theorem 3.1 of \cite{J:Gregory} is a corollary
of Theorem~\ref{E-L}:  choose $g(t,u,v)=u$
and consider the time scale to be the set of real numbers.

\item The Euler--Lagrange equation for the basic problem
of the Calculus of Variations on time scales
(see, \textrm{e.g.}, \cite{Bohner}) is easily obtained
from Theorem~\ref{E-L}: in this case, $\partial_4 L=0$
and therefore we get the equation
$$
\partial_{2}L(t,y^{\sigma}(t),y^\Delta(t))
- \frac{\Delta}{\Delta t}\partial_{3}L(t,y^{\sigma}(t),y^\Delta(t))=0
$$
for all $t\in\left[  a,b\right]^\kappa$.
\end{enumerate}
\end{Remark}

\begin{Remark} Theorem \ref{E-L} gives the Euler--Lagrange equation in the
delta-differential form. As in the classical case, one can obtain the
Euler--Lagrange equation in the integral form. More precisely, the
Euler--Lagrange equation in the delta-integral form to
problem (\ref{functional})--(\ref{boundary conditions}) is
$$
\displaystyle \partial_{3}L [y](t)
+ \partial_3 g\{y\}(t)
\cdot \int_{\sigma(t)}^b \partial_4 L [y](\tau)\Delta\tau
+ \int_t^b \Big(\partial_2 L [y](s)+
\displaystyle \partial_2 g\{y\}(s)
\cdot \int_{\sigma(s)}^b \partial_{4} L [y](\tau) \Delta\tau\Big)\Delta s=const.
$$
\end{Remark}


\subsection{Natural boundary conditions}
\label{subsection natural boundary}

We now consider the case when the values $y(a)$ and $y(b)$
are not necessarily specified.

\begin{Theorem}[Natural boundary conditions to (\ref{functional})]
\label{Theorem natural boundary conditions}
Suppose that $y_{\ast}$ is  a local minimizer (resp. local maximizer) to problem
(\ref{functional}).
Then $y_{\ast}$ satisfies
the Euler--Lagrange equation
(\ref{E-L equation}). Moreover,
\begin{enumerate}

\item  if  $y(a)$ is free,  then the natural boundary condition
\begin{equation}
\label{a}
\partial_3 L[y_\ast](a)= - \partial_3 g\{y_\ast\}(a)
 \cdot \int_{\sigma(a)}^b \partial_{4}L[y_\ast](\tau)\Delta\tau
\end{equation}
holds;
\item if $y(b)$ is free, then the natural boundary condition
\begin{equation}
\label{b}
\partial_3 L[y_\ast](b)=\partial_3 g\{y_\ast\}(b)\cdot
\int^{\sigma(b)}_b \partial_{4}L[y_\ast](\tau)\Delta\tau
\end{equation}
holds.
\end{enumerate}
\end{Theorem}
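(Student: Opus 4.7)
The plan is to revisit the variational computation from the proof of Theorem~\ref{E-L}, but without imposing $\eta(a)=0$ or $\eta(b)=0$. Setting $\phi(\epsilon):=\mathcal{L}(y_\ast+\epsilon\eta)$ and performing exactly the same two rounds of integration by parts as in the proof of Theorem~\ref{E-L}—one on the term containing $\partial_3 L\,\eta^\Delta$ and one on the inner $\partial_3 g$ double integral after swapping the order of integration—I would write $\phi'(0)$ as the interior integral appearing in Theorem~\ref{E-L} plus a boundary contribution $B(\eta)$. The boundary terms coming from the two double integrals involving $\partial_4 L$ vanish automatically, because the outer factor $\int_b^t\partial_4 L[y_\ast](\tau)\,\Delta\tau$ equals zero at $t=b$ and the inner factor $\int_a^t(\cdots)\,\Delta\tau$ equals zero at $t=a$; only the two ``single'' integrations by parts contribute, yielding
\[
B(\eta) = \left[\left(\partial_3 L[y_\ast](t) - \partial_3 g\{y_\ast\}(t)\cdot\int_b^{\sigma(t)}\partial_4 L[y_\ast](\tau)\,\Delta\tau\right)\eta(t)\right]_a^b.
\]

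First, I restrict to admissible variations $\eta$ with $\eta(a)=\eta(b)=0$, so that $B(\eta)=0$. The argument of Theorem~\ref{E-L} then applies verbatim via Lemma~\ref{Fund. Lemma} and produces the Euler--Lagrange equation (\ref{E-L equation}). Second, I invoke this equation to conclude that the interior integral in $\phi'(0)$ vanishes for \emph{every} admissible variation $\eta$ (not only for those vanishing at the endpoints), reducing the necessary condition $\phi'(0)=0$ to the identity $B(\eta)=0$ for every admissible $\eta$.

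Finally, the transversality conditions follow from appropriate one-sided choices of $\eta$. If $y(a)$ is free, I pick an admissible $\eta$ with $\eta(b)=0$ and $\eta(a)$ arbitrary; the resulting equation $B(\eta)=0$ forces the coefficient of $\eta(a)$ to vanish, and rewriting $\int_b^{\sigma(a)}=-\int_{\sigma(a)}^b$ gives exactly (\ref{a}). Symmetrically, if $y(b)$ is free, one chooses $\eta$ with $\eta(a)=0$ and $\eta(b)$ arbitrary, and the vanishing of the coefficient of $\eta(b)$ gives (\ref{b}).

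The only real obstacle is careful bookkeeping: keeping track of signs from each integration by parts and recognizing that some candidate boundary contributions silently vanish because of the specific shape of the iterated integrals. Once the formula for $B(\eta)$ is isolated cleanly, the two-step strategy (fixed endpoints first to extract the Euler--Lagrange equation, then free endpoints to extract the transversality conditions) is entirely standard.
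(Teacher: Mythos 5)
Your proposal is correct and follows essentially the same route as the paper: the same two rounds of integration by parts, the same observation that the iterated-integral boundary terms vanish because one factor is zero at each endpoint, and the same two-step scheme of first extracting the Euler--Lagrange equation from variations vanishing at both endpoints and then using it to reduce $\phi'(0)=0$ to the boundary contribution $B(\eta)=0$ for one-sided variations. Your explicit formula for $B(\eta)$ matches the boundary terms appearing in the paper's displays, and the sign bookkeeping (including rewriting $\int_b^{\sigma(a)}$ as $-\int_{\sigma(a)}^{b}$ to obtain (\ref{a})) is handled correctly.
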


\begin{proof}
Suppose that $y_{\ast}$ is a local minimizer
(resp. maximizer) to problem (\ref{functional}).
Let $\eta \in C^1_{rd}(\mathbb{T},\mathbb{R})$
and define the function $\phi:\mathbb{R}  \rightarrow \mathbb{R}$
by $\phi\left(  \epsilon\right)  :=\mathcal{L}( y_{\ast}+\epsilon \eta)$.
It is clear that a necessary condition for $y_{\ast}$ to be an extremizer
is given by $\phi^{\prime}\left(  0\right)=0$.
From the arbitrariness of $\eta$, and using similar arguments
as the ones used in the proof of Theorem~\ref{E-L}, we conclude that
$y_{\ast}$ satisfies the Euler--Lagrange equation (\ref{E-L equation}).

\begin{enumerate}
\item Suppose now that $y(a)$ is free.
If $y(b)=\beta$ is given, then $\eta(b)=0$;
if $y(b)$ is free, then we restrict ourselves
to those $\eta$ for which $\eta(b)=0$.
Therefore,
\begin{equation}
\label{boundary condition a}
\begin{split}
0 &=  \phi^\prime(0)\\
&= \displaystyle \int_a^b \Bigl( \partial_{2}L[y_\ast](t)
- \frac{\Delta}{\Delta t}\partial_{3}L[y_\ast](t)
- \displaystyle \partial_2 g\{y_\ast\}(t) \cdot
\int_b^{\sigma(t)} \partial_{4}L [y_\ast](\tau) \Delta \tau \\
& \quad + \displaystyle \frac{\Delta}{\Delta t}( \partial_3 g\{y_\ast\}(t)
\cdot \int_b^{\sigma(t)} \partial_{4}L [y_\ast](\tau) \Delta \tau )\Bigr) \eta^\sigma (t) \Delta t\\
& \quad -  \displaystyle
\partial_{3}L[y_\ast](a) \cdot \eta(a) + \partial_3 g\{y_\ast\}(a)
\cdot \int_b^{\sigma(a)} \partial_{4}L[y_\ast](\tau)\Delta\tau \cdot \eta(a).
\end{split}
\end{equation}
Using the Euler--Lagrange equation (\ref{E-L equation})
into (\ref{boundary condition a}) we obtain
$$
\left(- \partial_{3}L[y_\ast](a) + \partial_3 g\{y_\ast\}(a)
\cdot \int_b^{\sigma(a)} \partial_{4}L[y_\ast](\tau) \Delta \tau\right) \cdot \eta(a)=0.
$$
From the arbitrariness of $\eta$ it follows that
$$
\partial_{3}L[y_\ast](a)= \partial_3 g\{y_\ast\}(a)
 \cdot \int_b^{\sigma(a)} \partial_{4}L[y_\ast](\tau)\Delta\tau.
$$

\item Suppose now that $y(b)$ is free.
If $y(a)=\alpha$, then $\eta(a)=0$; if $y(a)$ is
 free, then we restrict ourselves to those $\eta$ for which $\eta(a)=0$.
Thus,
\begin{equation}
\label{boundary condition b}
\begin{split}
0 &= \phi^\prime(0)\\
&=\displaystyle \int_a^b \Bigl( \partial_{2}L[y_\ast](t)
- \frac{\Delta}{\Delta t}\partial_{3}L[y_\ast](t)
- \displaystyle \partial_2 g\{y_\ast\}(t) \cdot
\int_b^{\sigma(t)} \partial_{4}L [y_\ast](\tau) \Delta \tau \\
& \qquad + \displaystyle \frac{\Delta}{\Delta t}( \partial_3 g\{y_\ast\}(t)
\cdot \int_b^{\sigma(t)} \partial_{4}L [y_\ast](\tau)
\Delta \tau )\Bigr) \eta^\sigma (t) \Delta t\\
&\qquad + \partial_{3}L[y_\ast](b) \cdot \eta(b)-
\partial_3 g\{y_\ast\}(b)\cdot \int^{\sigma(b)}_b \partial_{4}L[y_\ast](\tau)\Delta\tau
.
\end{split}
\end{equation}
Using the Euler--Lagrange equation (\ref{E-L equation})
into (\ref{boundary condition b}),
and from the arbitrariness of $\eta$, it follows that
$$\partial_{3}L[y_\ast](b)=
\partial_3 g\{y_\ast\}(b)\cdot \int^{\sigma(b)}_b \partial_{4}L[y_\ast](\tau)\Delta\tau.$$
\end{enumerate}
\end{proof}

\begin{Remark}
In the classical setting, $\mathbb{T}=\mathbb{R}$
and $L$ does not depend on $z$. Then, equations (\ref{a})
and (\ref{b}) reduce to the well-known natural boundary conditions
$$
\partial_{3}L(a,y_\ast(a),y^{\prime}_\ast(a))=0
\quad \mbox{ and } \quad \partial_{3}L(b,y_\ast(b),y^{\prime}_\ast(b))=0,
$$
respectively.
\end{Remark}


\subsection{Isoperimetric problem}
\label{subsectin isoperimetric}

We now study the isoperimetric problem on time scales
with a delta integral constraint, both for normal
and abnormal extremizers. The problem consists
of minimizing or maximizing the functional
\begin{equation}
\label{functional 1}
\mathcal{L}(y) =  \int_{a}^{b} L\left(
t,y^{\sigma}(t), y^\Delta(t), z(t) \right) \Delta t,
\end{equation}
where the variable $z$ in the integrand is itself
expressed in terms of an indefinite delta integral
$$
z(t)=\int_{a}^{t} g\left(
\tau,y^{\sigma}(\tau), y^\Delta(\tau) \right) \Delta \tau,
$$
in the class of functions
$y \in C_{rd}^1(\mathbb{T},\mathbb R)$,
satisfying the boundary conditions
\begin{equation}
\label{boundary conditions 1}
y(a)=\alpha \quad \mbox{ and } \quad y(b)=\beta
\end{equation}
and the delta integral constraint
\begin{equation}
\label{isoperimetric 1}
\mathcal{J}(y)  =  \int_{a}^{b} F\left(
t,y^{\sigma}(t), y^\Delta(t), z(t) \right) \Delta t  =\gamma
\end{equation}
for some given $\alpha, \beta, \gamma \in \mathbb{R}$.

\begin{Definition}
We say that $y_\ast \in C_{rd}^{1}(\mathbb{T},\mathbb{R})$
is a  local minimizer (resp. local maximizer) to the isoperimetric problem
(\ref{functional 1})--(\ref{isoperimetric 1})
if there exists $\delta >0$ such that $\mathcal{L}(y_\ast) \leq \mathcal{L}(y)$
(resp. $\mathcal{L}(y_\ast) \geq \mathcal{L}(y)$) for all admissible $y$ satisfying
the boundary conditions (\ref{boundary conditions 1}), the isoperimetric constraint
(\ref{isoperimetric 1}), and $\parallel y - y_\ast \parallel < \delta$.
\end{Definition}

\begin{Definition}
We say that $y \in C_{rd}^1(\mathbb{T},\mathbb R)$ is an extremal
to $\mathcal{J}$ if $y$ satisfies the Euler--Lagrange equation
(\ref{E-L equation}) relatively to $\mathcal{J}$.
An extremizer (\textrm{i.e.}, a local minimizer or a local maximizer) to problem
(\ref{functional 1})--(\ref{isoperimetric 1})
that is not an extremal to $\mathcal{J}$ is said to be a normal extremizer; otherwise
(\textrm{i.e.}, if it is an extremal to $\mathcal{J}$), the extremizer is said to be abnormal.
\end{Definition}

\begin{Theorem}[Necessary optimality condition for normal extremizers
of (\ref{functional 1})--(\ref{isoperimetric 1})]
\label{normalcase}
Suppose that $y_\ast \in C_{rd}^1(\mathbb{T},\mathbb R)$
gives a local minimum or a local maximum
to the functional $\mathcal{L}$
subject to the boundary conditions (\ref{boundary conditions 1})
and the integral constraint (\ref{isoperimetric 1}). If $y_\ast$
is not an extremal to $\mathcal{J}$, then there exists
a real $\lambda$ such that $y_\ast$ satisfies the equation
\begin{equation}
\label{E-L equation for H}
\displaystyle \partial_{2}H  [y](t)
- \frac{\Delta}{\Delta t}\partial_{3} H [y](t)
+\displaystyle \partial_2 g\{y\}(t) \cdot \int_{\sigma(t)}^b \partial_4 H [y](\tau)\Delta\tau
- \displaystyle \frac{\Delta}{\Delta t}\left(\partial_3 g\{y\}(t)
\cdot \int_{\sigma(t)}^b \partial_4 H [y](\tau) \Delta\tau\right)=0
\end{equation}
for all $t \in [a,b]^\kappa$, where $H = L-\lambda F$.
\end{Theorem}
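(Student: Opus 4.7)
The plan is to follow the classical Lagrange multiplier argument, now applied to the functionals $\mathcal{L}$ and $\mathcal{J}$ on time scales. Consider a two-parameter family of variations $y = y_\ast + \epsilon_1\eta_1 + \epsilon_2 \eta_2$, where $\eta_1,\eta_2 \in C^1_{rd}(\mathbb{T},\mathbb{R})$ satisfy $\eta_i(a)=\eta_i(b)=0$ so that the boundary conditions (\ref{boundary conditions 1}) are preserved for all $(\epsilon_1,\epsilon_2)$ in a neighborhood of the origin. Define $\hat{\mathcal{L}}(\epsilon_1,\epsilon_2):=\mathcal{L}(y_\ast+\epsilon_1\eta_1+\epsilon_2\eta_2)$ and $\hat{\mathcal{J}}(\epsilon_1,\epsilon_2):=\mathcal{J}(y_\ast+\epsilon_1\eta_1+\epsilon_2\eta_2)-\gamma$.

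First I would choose $\eta_2$ so that $\partial \hat{\mathcal{J}}/\partial \epsilon_2|_{(0,0)}\neq 0$. This is exactly the point where the hypothesis that $y_\ast$ is \emph{not} an extremal to $\mathcal{J}$ is used: by the same first-variation computation as in the proof of Theorem~\ref{E-L} (applied to $F$ and $g$ instead of $L$ and $g$), the quantity $\partial \hat{\mathcal{J}}/\partial \epsilon_2|_{(0,0)}$ equals $\int_a^b E_F(t)\,\eta_2^\sigma(t)\,\Delta t$, where $E_F$ is the left-hand side of (\ref{E-L equation}) written for $F$. Since $y_\ast$ is not an extremal to $\mathcal{J}$, the function $E_F$ does not vanish identically, so a standard perturbation of $\eta_2$ (using the fundamental lemma \ref{Fund. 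Lemma} in its contrapositive form) produces such a choice.

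With $\eta_2$ fixed, the implicit function theorem applied to $\hat{\mathcal{J}}$ at $(0,0)$ yields a $C^1$ function $\epsilon_2=h(\epsilon_1)$ with $h(0)=0$ satisfying $\hat{\mathcal{J}}(\epsilon_1,h(\epsilon_1))\equiv 0$. Then $\epsilon_1\mapsto \hat{\mathcal{L}}(\epsilon_1,h(\epsilon_1))$ has a local extremum at $\epsilon_1=0$, so the Lagrange multiplier rule provides $\lambda\in\mathbb{R}$ (namely $\lambda=(\partial_{\epsilon_1}\hat{\mathcal{L}}/\partial_{\epsilon_1}\hat{\mathcal{J}})|_{(0,0)}$ after accounting for the direction $\eta_2$) such that
$$
\left.\frac{\partial}{\partial \epsilon_1}\bigl(\hat{\mathcal{L}}-\lambda\hat{\mathcal{J}}\bigr)\right|_{(0,0)}=0
$$
for every admissible $\eta_1$.

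Since $\mathcal{L}-\lambda\mathcal{J}=\int_a^b H[y](t)\,\Delta t$ with $H=L-\lambda F$ (and the same $g$ enters the definition of $z$), the previous display is exactly the vanishing of the first variation computed in Theorem~\ref{E-L} with $L$ replaced by $H$. Applying the fundamental lemma \ref{Fund. Lemma} to $\eta_1^\sigma$ then yields (\ref{E-L equation for H}) for all $t\in[a,b]^\kappa$. The main obstacle is the verification that $\partial \hat{\mathcal{J}}/\partial \epsilon_2|_{(0,0)}\neq 0$ can be achieved; everything else is a direct reuse of the integration-by-parts chain already established in the proof of Theorem~\ref{E-L}.
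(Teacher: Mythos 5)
Your proposal is correct and follows essentially the same route as the paper: a two-parameter family of variations, the choice of $\eta_2$ via the non-extremality of $y_\ast$ for $\mathcal{J}$ (justified by the contrapositive of Lemma~\ref{Fund. Lemma}), the implicit function theorem, the Lagrange multiplier rule, and a final application of Lemma~\ref{Fund. Lemma} after reusing the integration-by-parts computation from Theorem~\ref{E-L}. No gaps; the only difference is that the paper writes out the first variations of $\phi$ and $\psi$ explicitly rather than citing the earlier computation.
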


\begin{proof}
Suppose that $y_\ast \in C_{rd}^1(\mathbb{T},\mathbb R)$ is a normal
extremizer to problem (\ref{functional 1})--(\ref{isoperimetric 1}).
Define the real functions $\phi, \psi:\mathbb{R}^2 \rightarrow \mathbb{R}$ by
\begin{gather*}
\phi(\epsilon_1, \epsilon_2)
= \mathcal{I}(y_\ast + \epsilon_1 \eta_1 + \epsilon_2 \eta_2),\\
\psi(\epsilon_1, \epsilon_2)
= \mathcal{J}(y_\ast + \epsilon_1 \eta_1 + \epsilon_2 \eta_2)  - \gamma,
\end{gather*}
where $\eta_2$ is a fixed variation (that we will choose later) and $\eta_1$ is an arbitrary variation.
Note that
\begin{multline*}
\frac{\partial \psi}{\partial \epsilon_2}(0,0)
= \displaystyle\int_a^b \Bigl( \partial_{2}F[y_\ast](t)\eta_2^\sigma(t)
+ \partial_{3}F [y_\ast](t) \eta_2^\Delta(t)\\
+ \partial_{4}F [y_\ast](t)\cdot
\int_a^t \left(\partial_2 g\{y_\ast\}(\tau) \eta_2^\sigma(\tau)
+ \partial_3 g\{y_\ast\}(\tau) \eta_2^\Delta(\tau)\right) \Delta \tau \Bigr) \Delta t.
\end{multline*}
Integration by parts and $\eta_2(a)=\eta_2(b)=0$ gives
\begin{multline*}
\frac{\partial \psi}{\partial \epsilon_2}(0,0)
= \displaystyle \int_a^b \Bigl( \partial_{2}F[y_\ast](t)
- \frac{\Delta}{\Delta t}\partial_{3}F[y_\ast](t)
- \displaystyle \partial_2 g\{y_\ast\}(t) \cdot
\int_b^{\sigma(t)} \partial_{4}F [y_\ast](\tau) \Delta \tau \\
+ \displaystyle \frac{\Delta}{\Delta t}( \partial_3 g\{y_\ast\}(t)
\cdot \int_b^{\sigma(t)} \partial_{4}F [y_\ast](\tau) \Delta \tau )\Bigr)
\eta_2^\sigma (t) \Delta t.
\end{multline*}
Since, by hypothesis, $y_\ast$ is not an extremal to $\mathcal{J}$,
then we can choose $\eta_2$ such that
$\displaystyle \frac{\partial \psi}{\partial \epsilon_2}(0,0) \neq 0$.
We keep $\eta_2$ fixed. Since $\psi(0,0)=0$, by the implicit function theorem
there exists a function $h$, defined in a neighborhood $V$ of zero,
such that $h(0)=0$ and $\psi(\epsilon_1,h(\epsilon_1))=0$
for any $\epsilon_1 \in V$, \textrm{i.e.}, there exists a subset of variation curves
$y= y_\ast + \epsilon_1 \eta_1 + h(\epsilon_1)\eta_2$
satisfying the isoperimetric constraint.
Note that  $(0,0)$ is an extremizer of $\phi$ subject to the constraint $\psi=0$ and
$$
\nabla \psi (0,0) \neq (0,0).
$$
By the Lagrange multiplier rule (\textrm{cf.}, \textrm{e.g.}, \cite{vanBrunt}),
there exists some constant $\lambda \in \mathbb{R}$ such that
\begin{equation}\label{gradient} \nabla \phi (0,0) = \lambda \nabla \psi (0,0).
\end{equation}
Since
\begin{multline*}
\frac{\partial \phi}{\partial \epsilon_1}(0,0)
= \displaystyle \int_a^b \Bigl( \partial_{2}L[y_\ast](t)
- \frac{\Delta}{\Delta t}\partial_{3}L[y_\ast](t)
- \displaystyle \partial_2 g\{y_\ast\}(t) \cdot
\int_b^{\sigma(t)} \partial_{4}L [y_\ast](\tau) \Delta \tau \\
+ \displaystyle \frac{\Delta}{\Delta t}( \partial_3 g\{y_\ast\}(t)
\cdot \int_b^{\sigma(t)} \partial_{4}
L [y_\ast](\tau) \Delta \tau )\Bigr) \eta_1^\sigma (t) \Delta t
\end{multline*}
and
\begin{multline*}
\frac{\partial \psi}{\partial \epsilon_1}(0,0)
= \displaystyle \int_a^b \Bigl( \partial_{2}F[y_\ast](t)
- \frac{\Delta}{\Delta t}\partial_{3}F[y_\ast](t)
- \displaystyle \partial_2 g\{y_\ast\}(t) \cdot
\int_b^{\sigma(t)} \partial_{4}F [y_\ast](\tau) \Delta \tau \\
+ \frac{\Delta}{\Delta t}( \partial_3 g\{y_\ast\}(t)
\cdot \int_b^{\sigma(t)} \partial_{4}F [y_\ast](\tau) \Delta \tau )\Bigr)
\eta_1^\sigma (t) \Delta t,
\end{multline*}
it follows from (\ref{gradient}) that
\begin{equation*}
\begin{split}
0 &=\displaystyle \int_a^b \Bigl( \partial_{2}L[y_\ast](t)
- \frac{\Delta}{\Delta t}\partial_{3}L[y_\ast](t)-
\displaystyle \partial_2 g\{y_\ast\}(t) \cdot
\int_b^{\sigma(t)} \partial_{4}L [y_\ast](\tau) \Delta \tau \\
&\qquad + \displaystyle \frac{\Delta}{\Delta t}( \partial_3 g\{y_\ast\}(t)
\cdot \int_b^{\sigma(t)} \partial_{4}L [y_\ast](\tau) \Delta \tau )\\
&\qquad -  \displaystyle \lambda \Bigl(\partial_{2}F[y_\ast](t)
- \frac{\Delta}{\Delta t}\partial_{3}F[y_\ast](t)-
\displaystyle \partial_2 g\{y_\ast\}(t) \cdot
\int_b^{\sigma(t)} \partial_{4}F [y_\ast](\tau) \Delta \tau \\
&\qquad + \displaystyle \frac{\Delta}{\Delta t}( \partial_3 g\{y_\ast\}(t)
\cdot \int_b^{\sigma(t)} \partial_{4}F [y_\ast](\tau) \Delta \tau )\Bigr)\Bigr)
\eta_1^\sigma (t) \Delta t.
\end{split}
\end{equation*}
Using the fundamental lemma of the calculus of variations
(Lemma~\ref{Fund. Lemma}), and recalling that $\eta_1$
is arbitrary, we conclude that
\begin{equation*}
\begin{split}
0&= \displaystyle\partial_{2}L[y_\ast](t)
- \frac{\Delta}{\Delta t}\partial_{3}L[y_\ast](t)
-\displaystyle \partial_2 g\{y_\ast\}(t)
\cdot \int_b^{\sigma(t)} \partial_{4}L [y_\ast](\tau) \Delta \tau \\
&\qquad + \displaystyle \frac{\Delta}{\Delta t}( \partial_3 g\{y_\ast\}(t)
\cdot \int_b^{\sigma(t)} \partial_{4}L [y_\ast](\tau) \Delta \tau )\\
&\qquad - \displaystyle \lambda \Bigl(\partial_{2}F[y_\ast](t)
- \frac{\Delta}{\Delta t}\partial_{3}F[y_\ast](t)
- \displaystyle \partial_2 g\{y_\ast\}(t) \cdot
\int_b^{\sigma(t)} \partial_{4}F [y_\ast](\tau) \Delta \tau \\
&\qquad + \displaystyle \frac{\Delta}{\Delta t}( \partial_3 g\{y_\ast\}(t)
\cdot \int_b^{\sigma(t)} \partial_{4}F [y_\ast](\tau) \Delta \tau)\Bigr)
\end{split}
\end{equation*}
for all $t \in [a,b]^\kappa$, proving that $H= L-\lambda F$
satisfies the Euler--Lagrange equation (\ref{E-L equation for H}).
\end{proof}

\begin{Theorem}[Necessary optimality condition for normal and abnormal extremizers
of (\ref{functional 1})--(\ref{isoperimetric 1})]
\label{thm:abn}
Suppose that $y_\ast \in C_{rd}^1(\mathbb{T},\mathbb R)$ gives a local minimum or a local maximum
to the functional $\mathcal{L}$ subject to the boundary conditions (\ref{boundary conditions 1})
and the integral constraint (\ref{isoperimetric 1}). Then there exist two constants $\lambda_0$
and  $\lambda$, not both zero, such that $y_\ast$ satisfies the equation
\begin{equation}
\label{E-L equation for H normal and abnormal}
\displaystyle \partial_{2}H  [y](t)
- \frac{\Delta}{\Delta t}\partial_{3} H [y](t)
+\displaystyle \partial_2 g\{y\}(t) \cdot \int_{\sigma(t)}^b \partial_4 H [y](\tau)\Delta\tau
- \displaystyle \frac{\Delta}{\Delta t}\left(\partial_3 g\{y\}(t)
\cdot \int_{\sigma(t)}^b \partial_4 H [y](\tau) \Delta\tau\right)=0
\end{equation}
for all $t \in [a,b]^\kappa$, where $H = \lambda_0 L-\lambda F$.
\end{Theorem}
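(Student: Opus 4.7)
The plan is to prove Theorem~\ref{thm:abn} by a short case analysis based on whether or not $y_\ast$ is an extremal of the isoperimetric functional $\mathcal{J}$, leveraging the work already done in Theorem~\ref{normalcase}. The point is that the ``abnormal'' version merely adds one degenerate possibility to the normal conclusion, and that possibility can be handled by inspection.

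First I would dispose of the normal case. If $y_\ast$ is not an extremal of $\mathcal{J}$, then Theorem~\ref{normalcase} applies directly: it yields some $\lambda \in \mathbb{R}$ for which $H = L - \lambda F$ satisfies (\ref{E-L equation for H}). Setting $\lambda_0 = 1$ and keeping the same $\lambda$, one obtains $H = \lambda_0 L - \lambda F$ with $(\lambda_0,\lambda) = (1,\lambda) \neq (0,0)$, which is precisely the conclusion (\ref{E-L equation for H normal and abnormal}).

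Next I would handle the abnormal case. Suppose instead that $y_\ast$ is an extremal of $\mathcal{J}$. By the very definition of ``extremal to $\mathcal{J}$'' given before Theorem~\ref{normalcase}, $y_\ast$ satisfies the Euler--Lagrange equation (\ref{E-L equation}) with $L$ replaced by $F$, that is,
\begin{equation*}
\partial_{2}F[y_\ast](t) - \frac{\Delta}{\Delta t}\partial_{3}F[y_\ast](t)
+ \partial_2 g\{y_\ast\}(t)\cdot \int_{\sigma(t)}^b \partial_4 F[y_\ast](\tau)\Delta\tau
- \frac{\Delta}{\Delta t}\!\left(\partial_3 g\{y_\ast\}(t)\cdot \int_{\sigma(t)}^b \partial_4 F[y_\ast](\tau)\Delta\tau\right) = 0
\end{equation*}
for all $t\in[a,b]^\kappa$. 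Choosing $\lambda_0 = 0$ and $\lambda = -1$ (so that $H = \lambda_0 L - \lambda F = F$), the displayed identity is exactly (\ref{E-L equation for H normal and abnormal}), and $(\lambda_0,\lambda)=(0,-1)\neq(0,0)$ as required.

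Combining the two cases, in either situation we produce constants $(\lambda_0,\lambda) \neq (0,0)$ such that $H = \lambda_0 L - \lambda F$ satisfies the Euler--Lagrange equation (\ref{E-L equation for H normal and abnormal}) on $[a,b]^\kappa$. There is no real obstacle here: all the analytic work (integration by parts, application of the implicit function theorem and the Lagrange multiplier rule, invocation of Lemma~\ref{Fund. Lemma}) has already been carried out inside the proof of Theorem~\ref{normalcase}; the only content of Theorem~\ref{thm:abn} beyond that is the bookkeeping observation that the extremal-for-$\mathcal{J}$ case can be absorbed by allowing $\lambda_0 = 0$. Thus the proof is essentially a two-line case distinction.
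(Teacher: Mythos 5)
Your proof is correct, but it takes a genuinely different route from the paper. The paper does not split into cases: it re-runs the two-parameter variation $\phi(\epsilon_1,\epsilon_2)=\mathcal{L}(y_\ast+\epsilon_1\eta_1+\epsilon_2\eta_2)$, $\psi(\epsilon_1,\epsilon_2)=\mathcal{J}(y_\ast+\epsilon_1\eta_1+\epsilon_2\eta_2)-\gamma$ from the proof of Theorem~\ref{normalcase}, invokes the \emph{abnormal} Lagrange multiplier rule for the finite-dimensional problem (which supplies $(\lambda_0,\lambda)\neq(0,0)$ with $\lambda_0\nabla\phi(0,0)=\lambda\nabla\psi(0,0)$ without any constraint qualification), and then concludes via Lemma~\ref{Fund. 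Lemma}. You instead observe that the dichotomy ``$y_\ast$ is / is not an extremal to $\mathcal{J}$'' is exhaustive, dispatch the normal case by citing Theorem~\ref{normalcase} with $\lambda_0=1$, and dispatch the abnormal case purely from the definition of extremal to $\mathcal{J}$ with $(\lambda_0,\lambda)=(0,-1)$, so that $H=F$ and the required equation is literally the Euler--Lagrange equation for $\mathcal{J}$. Both arguments are sound. Your version buys economy: it needs no new variational computation and no appeal to the abnormal multiplier rule, and it makes transparent that the abnormal alternative carries no information about $L$ (optimality of $y_\ast$ is not even used in that branch). The paper's version buys uniformity: a single application of the John-type multiplier rule produces both constants at once, which is the pattern that scales more cleanly to several isoperimetric constraints, where a case analysis over which subfamily of constraint functionals is ``degenerate'' becomes cumbersome. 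One small point worth making explicit in your write-up is the linearity of the Euler--Lagrange expression in the Lagrangian (so that the equation for $F$ is equivalent to the equation for $-\lambda F$ for any $\lambda\neq 0$); with your specific choice $\lambda=-1$ this is not even needed, since then $H=F$ exactly.
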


\begin{proof}
Following the proof of Theorem \ref{normalcase},
since $(0,0)$ is an extremizer of $\phi$ subject
to the constraint $\psi=0$, the
abnormal Lagrange multiplier rule
(\textrm{cf.}, \textrm{e.g.}, \cite{vanBrunt})
guarantees the existence of two reals
$\lambda_0$ and $\lambda$, not both zero, such that
$$
\lambda_0 \nabla \phi = \lambda \nabla \psi.
$$
Therefore,
$$
\lambda_0 \displaystyle\frac{\partial \phi}{\partial \epsilon_1}(0,0)
= \lambda \displaystyle\frac{\partial \psi}{\partial \epsilon_1}(0,0)
$$
and hence,
\begin{equation*}
\begin{split}
0 &= \displaystyle \int_a^b \Bigl( \lambda_0( \partial_{2}L[y_\ast](t)
- \frac{\Delta}{\Delta t}\partial_{3}L[y_\ast](t)-
\displaystyle \partial_2 g\{y_\ast\}(t) \cdot
\int_b^{\sigma(t)} \partial_{4}L [y_\ast](\tau) \Delta \tau \\
&\qquad + \displaystyle \frac{\Delta}{\Delta t}( \partial_3 g\{y_\ast\}(t)
\cdot \int_b^{\sigma(t)} \partial_{4}L [y_\ast](\tau) \Delta \tau ) )\\
&\qquad -  \displaystyle \lambda \Bigl(\partial_{2}F[y_\ast](t)
- \frac{\Delta}{\Delta t}\partial_{3}F[y_\ast](t)-
\displaystyle \partial_2 g\{y_\ast\}(t) \cdot
\int_b^{\sigma(t)} \partial_{4}F [y_\ast](\tau) \Delta \tau \\
&\qquad + \displaystyle \frac{\Delta}{\Delta t}( \partial_3 g\{y_\ast\}(t) \cdot
\int_b^{\sigma(t)} \partial_{4}F [y_\ast](\tau) \Delta \tau )\Bigr)\Bigr) \eta_1^\sigma (t) \Delta t.
\end{split}
\end{equation*}
From the arbitrariness of $\eta_1$ and Lemma~\ref{Fund. Lemma}
it is clear that equation
(\ref{E-L equation for H normal and abnormal}) holds for all
$t \in [a,b]^\kappa$, where $H = \lambda_0 L-\lambda F$.
\end{proof}

\begin{Remark}
Note that
\begin{enumerate}
\item If $y_\ast$ is a normal extremizer, then
one can consider, by Theorem~\ref{normalcase},
$\lambda_0 = 1$ in Theorem~\ref{thm:abn}.
The condition $(\lambda_0,\lambda) \ne (0,0)$ guarantees
that Theorem~\ref{thm:abn} is a useful necessary condition.
\item Theorem 3.4 of \cite{FerreiraTorres} is a corollary
of our Theorem~\ref{normalcase}:
in that case, $\partial_4 H=0$ and we simply obtain
$$
\partial_{2}H (t,y^\sigma(t),y^\Delta(t))
- \frac{\Delta}{\Delta t}\partial_{3} H(t,y^\sigma(t),y^\Delta(t)) =0
$$
for all $t \in [a,b]^\kappa$.
\end{enumerate}
\end{Remark}

We present two important corollaries that are obtained
from Theorem~\ref{thm:abn} choosing the time scale to be
$\mathbb{T}=h\mathbb{Z}:=\{ hz: z \in
\mathbb{Z}\}$, $h>0$, and $\mathbb{T}=q^{\mathbb{N}_0} :=\{ q^k: k \in
\mathbb{N}_0\}$, $q>1$.
In what follows we use the standard notation of \emph{quantum calculus}
(see, \textrm{e.g.}, \cite{KacCheung,MalinowskaTorres,withMiguel01}):
$$
\Delta_h y(t):=\displaystyle\frac{y(t+h)-y(t)}{h}
\quad \mbox{and} \quad
D_q y(t):=\displaystyle\frac{y(qt)-y(t)}{(q-1)t}.
$$

\begin{Cor}
Let $h>0$ and suppose that  $y_\ast$ is a solution to the discrete-time problem
$$
\mathcal{L}(y) = \sum_{t=a}^{b-h} L\left(t,y(t+h), \Delta_h y(t), z(t) \right)
\longrightarrow \mathrm{extr}
$$
with
$$
z(t)= \sum_{\tau=a}^{t-h} g\left(\tau,y(\tau+h), \Delta_h y(\tau)\right)
$$
in the class of functions $y$ satisfying the boundary conditions
$$
y(a)=\alpha \quad \mbox{ and } \quad y(b)=\beta
$$
and the constraint
$$
\mathcal{J}(y)
= \sum_{t=a}^{b-h} F\left(t,y(t+h), \Delta_h y(t), z(t) \right)
= \gamma
$$
for some given  $\alpha, \beta, \gamma \in \mathbb{R}$.
Then there exist two constants $\lambda_0$ and  $\lambda$,
not both zero, such that
\begin{equation*}
\begin{split}
0 &= \partial_{2}H(t,y_\ast(t+h),\Delta_h y_\ast(t), z_\ast(t))
- \Delta_h\partial_{3} H(t,y_\ast(t+h),\Delta_h y_\ast(t), z_\ast(t))\\
&\qquad + \displaystyle \partial_2 g(t,y_\ast(t+h),\Delta_h y_\ast(t))
\cdot \sum_{\tau=t+h}^{b-h} \partial_4 H(\tau,y_\ast(\tau+h),\Delta_h y_\ast(\tau), z_\ast(\tau))\\
&\qquad -  \displaystyle \Delta_h\left(\partial_3 g(t,y_\ast(t+h),\Delta_h y_\ast(t))
\cdot \sum_{\tau=t+h}^{b-h} \partial_4 H (\tau,y_\ast(\tau+h),\Delta_h y_\ast(\tau), z_\ast(\tau))\right)
\end{split}
\end{equation*}
for all $t \in \{a, a+h, \ldots, b-h\}$, where $H = \lambda_0 L-\lambda F$.
\end{Cor}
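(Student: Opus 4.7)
The plan is to apply Theorem~\ref{thm:abn} directly, taking the time scale to be $\mathbb{T} = h\mathbb{Z}$. The entire proof is a translation of the abstract time-scale Euler--Lagrange condition into the language of the $h$-difference calculus.

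First, I would recall the explicit form of the time-scale operators on $h\mathbb{Z}$: the forward jump is $\sigma(t) = t + h$, the graininess is $\mu(t) \equiv h$, the delta derivative coincides with the forward difference quotient $y^\Delta(t) = (y(t+h) - y(t))/h = \Delta_h y(t)$, and consequently $y^\sigma(t) = y(t+h)$. Since every function on $h\mathbb{Z}$ is rd-continuous, all smoothness hypotheses of the general theory hold automatically. Furthermore, for any such $f$ one has $\int_a^b f(t)\,\Delta t = h \sum_{t=a}^{b-h} f(t)$. With these identifications, the functionals $\mathcal{L}$ and $\mathcal{J}$ of problem (\ref{functional 1})--(\ref{isoperimetric 1}) become the sums displayed in the corollary (up to the harmless global factor $h$), and $z(t) = \int_a^t g\,\Delta \tau$ becomes the indicated partial sum.

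Next, I would substitute these identifications into the Euler--Lagrange equation (\ref{E-L equation for H normal and abnormal}) guaranteed by Theorem~\ref{thm:abn}. The operator $\frac{\Delta}{\Delta t}$ becomes $\Delta_h$; the integral $\int_{\sigma(t)}^b \partial_4 H [y](\tau)\,\Delta \tau$ becomes $h \sum_{\tau=t+h}^{b-h} \partial_4 H(\tau, y(\tau+h), \Delta_h y(\tau), z(\tau))$; and the partial derivatives of $g$ evaluated at $\{y\}(t)$ become the partials evaluated at $(t, y(t+h), \Delta_h y(t))$. Because the equation is homogeneous (set equal to zero), the uniform factor $h$ arising from the sum-to-integral conversion can be absorbed into the multipliers $(\lambda_0, \lambda)$ without changing the condition $(\lambda_0,\lambda)\neq(0,0)$. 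This yields precisely the formula stated in the corollary for all $t\in\{a, a+h, \ldots, b-h\} = [a,b]^\kappa$.

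The only genuine work is bookkeeping: one must verify that the delta-to-$\Delta_h$ and integral-to-sum conversions are performed consistently in all four terms of (\ref{E-L equation for H normal and abnormal}), and that the evaluation conventions $[y](t) = (t, y^\sigma(t), y^\Delta(t), z(t))$ and $\{y\}(t) = (t, y^\sigma(t), y^\Delta(t))$ translate uniformly to $(t, y(t+h), \Delta_h y(t), z(t))$ and $(t, y(t+h), \Delta_h y(t))$ respectively. No new analytic input is needed; the corollary is simply Theorem~\ref{thm:abn} rewritten in $h$-calculus notation.
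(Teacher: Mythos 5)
Your proposal is correct and follows exactly the paper's route: the paper's entire proof is the one-line remark that one chooses $\mathbb{T}=h\mathbb{Z}$ and invokes Theorem~\ref{thm:abn}, and your write-up merely supplies the bookkeeping ($\sigma(t)=t+h$, $y^\Delta(t)=\Delta_h y(t)$, $\int_{\sigma(t)}^{b}\cdot\,\Delta\tau = h\sum_{\tau=t+h}^{b-h}\cdot$, $[a,b]^\kappa=\{a,a+h,\ldots,b-h\}$) that the paper leaves implicit. The one slightly loose point is your claim that the factor $h$ is ``uniform'' and absorbable into the multipliers --- since the $h$ from the integral-to-sum conversion multiplies only the two terms containing $\int_{\sigma(t)}^b\partial_4 H$, the cleaner statement is that replacing $L$, $F$, $g$ by $L/h$, $F/h$, $g/h$ makes the stated sums coincide exactly with the time-scale functionals and with $z(t)=\int_a^t g\,\Delta\tau$, and then rescales every term of (\ref{E-L equation for H normal and abnormal}) by the same factor $1/h$ --- but this does not affect the validity of the argument.
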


\begin{proof}
Choose $\mathbb{T}=h\mathbb{Z}$,
where $a,b \in \mathbb{T}$. The result follows
from Theorem~\ref{thm:abn}.
\end{proof}

\begin{Cor}
Let $q>1$ and suppose that $y_\ast$ is a solution to the quantum problem
$$
\mathcal{L}(y) =  \sum_{t=a}^{bq^{-1}}
L\left(t,y(qt), D_q y(t), z(t) \right)  \longrightarrow \mathrm{extr}
$$
with
$$
z(t)= \sum_{\tau=a}^{tq^{-1}} g\left(\tau,y(q\tau), D_q y(\tau)\right)
$$
in the class of functions $y$ satisfying the boundary conditions
$$
y(a)=\alpha \ \ \ \ \ \mbox{and} \ \ \ \ \ y(b)=\beta
$$
and the constraint
$$
\mathcal{J}(y) =  \sum_{t=a}^{bq^{-1}} F\left(t,y(qt), D_q y(t), z(t) \right)=\gamma
$$
for some given  $\alpha, \beta, \gamma \in \mathbb{R}$. Then
there exist two constants $\lambda_0$ and  $\lambda$, not both zero, such that
\begin{equation*}
\begin{split}
0&= \partial_{2}H(t,y_\ast(qt),D_q y_\ast(t), z_\ast(t))
- D_q\partial_{3} H(t,y_\ast(qt),D_q y_\ast(t), z_\ast(t))\\
&\qquad + \partial_2 g(t,y_\ast(qt),D_q y_\ast(t))
\cdot \sum_{\tau=qt}^{bq^{-1}} \partial_4 H(\tau,y_\ast(q\tau),D_q y_\ast(\tau), z_\ast(\tau))\\
&\qquad -  D_q\left(\partial_3 g(t,y_\ast(qt),D_q y_\ast(t))
\cdot \sum_{\tau=qt}^{bq^{-1}} \partial_4 H (\tau,y_\ast(q\tau),D_q y_\ast(\tau), z_\ast(\tau))\right)
\end{split}
\end{equation*}
for all $t \in \{a, q a, \ldots, bq^{-1}\}$,
where $H = \lambda_0 L-\lambda F$.
\end{Cor}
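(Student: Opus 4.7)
The plan is to obtain the corollary as a direct specialization of Theorem~\ref{thm:abn} to the quantum time scale $\mathbb{T} = q^{\mathbb{N}_0}$, translating every time-scale object into its $q$-calculus counterpart.

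First, I would set $\mathbb{T} = q^{\mathbb{N}_0}$ and recall the standard identifications on this time scale: the forward jump is $\sigma(t) = qt$, the graininess is $\mu(t) = (q-1)t$, we have $y^\sigma(t) = y(qt)$, and the delta derivative coincides with the Jackson derivative $D_q y(t)$. Because $\mathbb{T}$ is purely isolated, every function is automatically rd-continuous, so the regularity hypotheses on $L$, $F$, and $g$ demanded by Theorem~\ref{thm:abn} are trivially satisfied by the data of the corollary. Since $a,b \in q^{\mathbb{N}_0}$, the set $[a,b]^\kappa$ becomes the finite collection $\{a, qa, q^2 a, \ldots, bq^{-1}\}$.

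Next, I would rewrite each delta integral using the formula $\int_a^b f(\tau)\Delta\tau = \sum_{\tau \in [a,b)\cap \mathbb{T}} \mu(\tau) f(\tau)$. Under this translation, the functionals $\mathcal{L}$ and $\mathcal{J}$ become the finite $q$-sums displayed in the statement, and each inner integral $\int_{\sigma(t)}^{b} \partial_4 H[y](\tau)\Delta\tau$ becomes $\sum_{\tau = qt}^{bq^{-1}} \partial_4 H(\tau, y(q\tau), D_q y(\tau), z(\tau))$. Likewise, the indefinite integral $z(t)$ turns into the prescribed partial sum, and every occurrence of $\frac{\Delta}{\Delta t}$ is replaced by $D_q$.

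Finally, I would invoke Theorem~\ref{thm:abn} to produce two constants $\lambda_0$ and $\lambda$, not both zero, for which equation (\ref{E-L equation for H normal and abnormal}) holds pointwise on $[a,b]^\kappa$ with $H = \lambda_0 L - \lambda F$. Plugging the identifications of the previous step into that equation yields, term by term, exactly the quantum Euler--Lagrange identity asserted in the corollary at each $t \in \{a, qa, \ldots, bq^{-1}\}$. No substantive obstacle arises; the only point requiring care is the bookkeeping of the index ranges $qt$ and $bq^{-1}$ that delimit the inner $q$-sums, which follows mechanically from $\sigma(t) = qt$ and from the upper endpoint of the delta integral on $q^{\mathbb{N}_0}$.
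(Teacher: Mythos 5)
Your proposal is correct and follows exactly the route of the paper, whose entire proof is the one-line observation that the result follows from Theorem~\ref{thm:abn} upon choosing $\mathbb{T}=q^{\mathbb{N}_0}$ with $a,b\in\mathbb{T}$; you merely spell out the standard identifications ($\sigma(t)=qt$, $y^\Delta=D_q y$, delta integrals as $q$-sums) that the paper leaves implicit. The only caveat is that the translation $\int_a^b f(\tau)\Delta\tau=\sum_{\tau}\mu(\tau)f(\tau)$ carries the graininess weight $(q-1)\tau$, which the corollary's sums (following the paper's own notational convention) suppress, so your claim that the integrals become ``exactly'' the displayed sums inherits that same looseness rather than introducing a new gap.
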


\begin{proof}
Choose $\mathbb{T}=q^{\mathbb{N}_0}$,
where $a,b \in \mathbb{T}$. The result follows
from Theorem~\ref{thm:abn}.
\end{proof}


\subsection{Duality}
\label{duality}

In the paper \cite{Caputo} (see also \cite{Pawluszewicz:Torres,MyID:174})
Caputo states that the \emph{delta calculus} and the \emph{nabla calculus}
on time scales are the ``dual'' of each other.
A \emph{Duality Principle} is presented, that basically asserts that
it is possible  to obtain results
for the \emph{nabla calculus} directly
from results on the \emph{delta calculus} and vice versa.
Using the duality arguments of Caputo it is possible to prove easily
the \emph{nabla versions} of Theorem~\ref{E-L},
Theorem~\ref{Theorem natural boundary conditions},
Theorem~\ref{normalcase} and  Theorem~\ref{thm:abn}.

In what follows we assume that there exist at least three points
on the time scale: $r,a,b \in \mathbb{T}$ with $r<a<b$, and
that the operator $\rho$ is nabla differentiable. 
The following theorem is the
\emph{nabla version} of Theorem~\ref{thm:abn}, 
where the variational problem consists
of minimizing or maximizing the functional
\begin{equation}
\label{nabla-functional 1}
\mathcal{L}(y) =  \int_{a}^{b} L\left(
t,y^{\rho}(t), y^\nabla(t), z(t) \right) \nabla t,
\end{equation}
the variable $z$ in the integrand being itself
expressed in terms of a nabla indefinite integral
$$
z(t)=\int_{a}^{t} g\left(
\tau,y^{\rho}(\tau), y^\nabla(\tau) \right) \nabla \tau,
$$
in the class of functions $y \in C_{ld}^1(\mathbb{T},\mathbb R)$
satisfying the boundary conditions
\begin{equation}
\label{nabla-boundary conditions 1}
y(a)=\alpha \quad \mbox{ and } \quad y(b)=\beta
\end{equation}
and the nabla integral constraint
\begin{equation}
\label{nabla-isoperimetric 1}
\mathcal{J}(y)
= \int_{a}^{b} F\left(t,y^{\rho}(t), y^\nabla(t), z(t) \right) \nabla t
=\gamma
\end{equation}
for some given  $\alpha, \beta, \gamma \in \mathbb{R}$. We assume that

\begin{enumerate}
\item the admissible functions $y$ belong to the class $C^{1}_{ld}(\mathbb{T},\mathbb{R})$;
\item $(t,y,v,z) \rightarrow L(t,y,v,z)$ \ and  \ $(t,y,v,z) \rightarrow F(t,y,v,z)$
have continuous partial derivatives with respect to $y,v,z$ for all $t \in [a,b]$;
\item $(t,y,v) \rightarrow g(t,y,v)$ has continuous partial
derivatives with respect to $y,v$ for all $t \in [a,b]$;
\item $t \rightarrow L(t, y^\rho(t), y^\nabla(t),z(t))$
and $t \rightarrow F(t, y^\rho(t), y^\nabla(t),z(t))$ belong to the class
$C_{ld}(\mathbb{T},\mathbb{R})$ for any admissible function $y$;
\item $t \rightarrow  \partial_3 L(t, y^\rho(t), y^\nabla(t),z(t))$,
$t \rightarrow  \partial_3 F(t, y^\rho(t), y^\nabla(t),z(t))$
and $t \rightarrow \partial_3 g(t, y^\rho(t), y^\nabla(t))$
belong to the class $C^{1}_{ld}(\mathbb{T},\mathbb{R})$
for any admissible function $y$.
\end{enumerate}

The following operators are used:
$$
\lceil y\rfloor  (t):=(t,y^\rho(t),y^\nabla(t),z(t))
\quad \mbox{and} \quad \langle y
\rangle(t):=(t,y^\rho(t),y^\nabla(t)),
\quad \mbox{ where } \quad y \in C^1_{ld}(\mathbb{T},\mathbb{R}).
$$

\begin{Theorem}[Necessary optimality condition for normal and abnormal extremizers
of (\ref{nabla-functional 1})--(\ref{nabla-isoperimetric 1})]
\label{nabla-thm:abn}
Suppose that $y_\ast \in C_{ld}^1(\mathbb{T},\mathbb R)$
gives a local minimum or a local maximum to the functional $\mathcal{L}$
subject to the boundary conditions (\ref{nabla-boundary conditions 1})
and the integral constraint (\ref{nabla-isoperimetric 1}). Then there exist
two constants $\lambda_0$ and  $\lambda$, not both zero, such that
$y_\ast$ satisfies the equation
$$
\displaystyle \partial_{2}H  \lceil y \rfloor (t)
- \frac{\nabla}{\nabla t}\partial_{3} H \lceil y \rfloor  (t)
+\displaystyle \partial_2 g \langle y \rangle (t)
\cdot \int_{\rho(t)}^b \partial_4 H \lceil y \rfloor  (\tau)\nabla\tau
- \displaystyle \frac{\nabla}{\nabla t}\left(\partial_3 g \langle y \rangle(t)
\cdot \int_{\rho(t)}^b \partial_4 H \lceil y \rfloor (\tau) \nabla\tau\right)=0
$$
for all $t \in [a,b]_\kappa$, where $H = \lambda_0 L-\lambda F$.
\end{Theorem}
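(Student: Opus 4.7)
The plan is to invoke Caputo's Duality Principle, as the paragraph preceding the theorem explicitly announces. Concretely, I would reduce Theorem~\ref{nabla-thm:abn} to Theorem~\ref{thm:abn} by transporting the problem to the dual time scale $\mathbb{T}^{\ast}:=\{-t : t\in\mathbb{T}\}$. Under this involution one has the well-known correspondences $\sigma_{\mathbb{T}^{\ast}}(-t)=-\rho_{\mathbb{T}}(t)$, $\rho_{\mathbb{T}^{\ast}}(-t)=-\sigma_{\mathbb{T}}(t)$, and, for a function $\tilde{y}(s):=y(-s)$ defined on $\mathbb{T}^{\ast}$, the identities $\tilde{y}^{\Delta}(s)=-y^{\nabla}(-s)$ and $\tilde{y}^{\sigma}(s)=y^{\rho}(-s)$, together with $\int_{-b}^{-a}\tilde{f}(s)\Delta s=\int_{a}^{b}f(t)\nabla t$.

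First, I would rewrite the nabla problem (\ref{nabla-functional 1})--(\ref{nabla-isoperimetric 1}) on $[a,b]\subset\mathbb{T}$ as an equivalent delta problem on $[-b,-a]\subset\mathbb{T}^{\ast}$. Defining
$$\tilde{L}(s,u,v,w):=L(-s,u,-v,w),\qquad \tilde{F}(s,u,v,w):=F(-s,u,-v,w),\qquad \tilde{g}(s,u,v):=-g(-s,u,-v),$$
and $\tilde{z}(s):=\int_{-b}^{s}\tilde{g}(\tau,\tilde{y}^{\sigma}(\tau),\tilde{y}^{\Delta}(\tau))\Delta\tau$, one checks, via the chain rule and the change-of-variable formula above, that $\tilde{z}(s)=z(-s)$ and that the functionals $\mathcal{L}$ and $\mathcal{J}$ pull back to functionals $\tilde{\mathcal{L}}$ and $\tilde{\mathcal{J}}$ of the required delta form, with boundary values $\tilde{y}(-b)=\beta$, $\tilde{y}(-a)=\alpha$. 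Moreover, the regularity hypotheses 1--5 on $L,F,g$ translate to the analogous delta hypotheses on $\tilde{L},\tilde{F},\tilde{g}$, since $C_{ld}(\mathbb{T},\mathbb{R})$ pulls back to $C_{rd}(\mathbb{T}^{\ast},\mathbb{R})$.

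Second, since $\tilde{y}_{\ast}(s):=y_{\ast}(-s)$ is a local extremizer of $\tilde{\mathcal{L}}$ under the transported boundary and isoperimetric constraints, Theorem~\ref{thm:abn} applied on $\mathbb{T}^{\ast}$ produces $\lambda_0,\lambda\in\mathbb{R}$, not both zero, such that the delta Euler--Lagrange equation for $\tilde{H}=\lambda_0\tilde{L}-\lambda\tilde{F}$ is satisfied for all $s\in[-b,-a]^{\kappa}$. The final step is to undo the substitution: using $\partial_2,\partial_3,\partial_4$ chain-rule identities for $\tilde{L},\tilde{F},\tilde{g}$ (the sign flip in the third slot is absorbed by the sign flip $\tilde{g}=-g$ and by $\frac{\Delta}{\Delta s}=-\frac{\nabla}{\nabla t}$ at $s=-t$), and the correspondences $\sigma_{\mathbb{T}^{\ast}}(s)\leftrightarrow\rho_{\mathbb{T}}(t)$ and $\int_{s}^{-a}\Delta\tau\leftrightarrow\int_{\rho(t)}^{b}\nabla\tau$, the delta equation on $\mathbb{T}^{\ast}$ becomes, term by term, exactly the claimed nabla Euler--Lagrange equation on $[a,b]_{\kappa}\subset\mathbb{T}$ with $H=\lambda_0 L-\lambda F$.

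The main obstacle is purely bookkeeping: tracking the minus signs produced by the substitution $t\mapsto -t$ through each of the four terms of the Euler--Lagrange equation and checking that they cancel pairwise (the sign in $\tilde{g}$ cancels the sign from $\tilde{y}^{\Delta}=-y^{\nabla}$ inside $\partial_3 g$; the sign from $\frac{\Delta}{\Delta s}=-\frac{\nabla}{\nabla t}$ cancels the sign inherited by the reversed integration limits when one rewrites $\int_{\sigma_{\mathbb{T}^{\ast}}(s)}^{-a}\partial_4\tilde{H}\,\Delta\tau$ as $\int_{\rho(t)}^{b}\partial_4 H\,\nabla\tau$). Once the duality dictionary is laid out explicitly, the verification is mechanical and no fresh analytic idea is required beyond what is already contained in Theorem~\ref{thm:abn}. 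An equally valid, but less economical, alternative would be to re-run the entire proof of Theorem~\ref{thm:abn} directly in nabla form, using nabla integration by parts and the nabla analogue of Lemma~\ref{Fund. Lemma}; I prefer the duality route since the authors have precisely set it up in the preceding subsection.
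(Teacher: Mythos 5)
Your strategy is the one the paper itself announces (the paper offers no written proof of Theorem~\ref{nabla-thm:abn}, only the assertion that it follows from Theorem~\ref{thm:abn} via Caputo's duality), but the dictionary you write down breaks at exactly the point where this problem differs from the basic one: the indefinite integral. With $\tilde g(s,u,v):=-g(-s,u,-v)$ and $\tilde z(s):=\int_{-b}^{s}\tilde g(\tau,\tilde y^{\sigma}(\tau),\tilde y^{\Delta}(\tau))\Delta\tau$, the change-of-variable formula $\int_{-b}^{s}\hat h(\tau)\Delta\tau=\int_{-s}^{b}h(u)\nabla u$ gives
$$
\tilde z(s)=-\int_{-s}^{b}g\left(u,y^{\rho}(u),y^{\nabla}(u)\right)\nabla u=z(-s)-z(b),
$$
not $z(-s)$. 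The offset $z(b)=\int_a^b g\,\nabla u$ depends on $y$, so it cannot be absorbed into a redefinition of $\tilde L$ and $\tilde F$; the transported functional is $\int_a^b L\left(t,y^{\rho}(t),y^{\nabla}(t),z(t)-z(b)\right)\nabla t\neq\mathcal{L}(y)$. The underlying reason is that time reversal sends the left endpoint $a$ to the \emph{right} endpoint $-a$ of $[-b,-a]$, so the forward indefinite integral $\int_a^t$ dualizes to a backward one anchored at the right endpoint, which is not of the form required by (\ref{functional 1}). The same orientation error reappears in your final correspondence: $\int_{\sigma_{\mathbb{T}^{\ast}}(s)}^{-a}=\int_{-\rho(t)}^{-a}$ transports to $\int_{a}^{\rho(t)}\partial_4 H\,\nabla\tau$, not to $\int_{\rho(t)}^{b}\partial_4 H\,\nabla\tau$; these differ by the generally nonzero constant $\int_a^b\partial_4 H\,\nabla\tau$, so applying Theorem~\ref{thm:abn} on $\mathbb{T}^{\ast}$ would yield a genuinely different equation from the one claimed. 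This is not mere bookkeeping of signs.

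The theorem itself is correct, and there are two ways to close the gap. Either first establish the delta analogue of Theorem~\ref{thm:abn} for the backward indefinite integral $w(t)=\int_t^b g\{y\}(\tau)\Delta\tau$ (whose Euler--Lagrange equation involves $\int_a^{\sigma(t)}\partial_4 H\,\Delta\tau$) --- a statement the paper does not provide --- and dualize that; or take the alternative you mention only in passing and re-run the proof of Theorem~\ref{thm:abn} directly in nabla form, using nabla integration by parts ($\int_a^b A^{\nabla}B\,\nabla t=[AB]_a^b-\int_a^b A^{\rho}B^{\nabla}\,\nabla t$ with $A(t)=\int_b^t\partial_4 H\,\nabla\tau$, $B(t)=\int_a^t(\partial_2 g\,\eta^{\rho}+\partial_3 g\,\eta^{\nabla})\nabla\tau$) together with the nabla fundamental lemma. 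That computation goes through verbatim and does produce $\int_{\rho(t)}^{b}\partial_4 H\,\nabla\tau$; it is the argument you should actually write out.
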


\begin{Remark}
Theorem~2 of \cite{Almeida:Torres} is a corollary
of our Theorem~\ref{nabla-thm:abn}:
in that case $\partial_4 H=0$, and one obtains
$$
\partial_{2}H (t,y^\rho(t),y^\nabla(t))
- \frac{\nabla}{\nabla t}\partial_{3} H(t,y^\rho(t),y^\nabla(t)) =0
$$
for all $t \in [a,b]_\kappa$.
\end{Remark}

From Theorem~\ref{E-L}, via duality, one can easily obtain the Euler--Lagrange
equation for the nabla problem (\ref{nabla-functional 1})--(\ref{nabla-boundary conditions 1})
(or from Theorem~\ref{nabla-thm:abn} noting that, since there is no nabla integral constraint,
$F=0$ and $\gamma=0$).

\begin{Theorem}[Necessary optimality condition
to (\ref{nabla-functional 1})--(\ref{nabla-boundary conditions 1})]
\label{nabla-E-L}
Suppose that $y_{\ast}$ is a local minimizer or local maximizer
to problem (\ref{nabla-functional 1})--(\ref{nabla-boundary conditions 1}).
Then $y_{\ast}$ satisfies the Euler--Lagrange equation
\begin{equation}
\label{nablaELequation}
\displaystyle \partial_{2}L \lceil y \rfloor(t)
- \frac{\nabla}{\nabla t}\partial_{3} L \lceil y \rfloor(t)
+ \displaystyle \partial_2 g\langle y \rangle(t)
\cdot \int_{\rho(t)}^b \partial_4 L \lceil y \rfloor(\tau)\nabla\tau
- \displaystyle \frac{\nabla}{\nabla t}\left(\partial_3 g\langle y \rangle(t)
\cdot \int_{\rho(t)}^b \partial_4 L \lceil y \rfloor(\tau)\nabla\tau\right)=0
\end{equation}
for all $t\in\left[a,b\right]_\kappa$.
\end{Theorem}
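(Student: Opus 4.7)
The plan is to derive Theorem~\ref{nabla-E-L} from Theorem~\ref{E-L} by Caputo's duality principle~\cite{Caputo}, as indicated by the authors just before the statement. First, I introduce the dual time scale $\widetilde{\mathbb{T}}:=\{-t : t \in \mathbb{T}\}$ equipped with its forward jump $\widetilde{\sigma}(s)=-\rho(-s)$ and its delta operator; for $y \in C^{1}_{ld}(\mathbb{T},\mathbb{R})$ I set $\widetilde{y}(s):=y(-s)$, so that $\widetilde{y}\in C^{1}_{rd}(\widetilde{\mathbb{T}},\mathbb{R})$, $\widetilde{y}^{\widetilde{\sigma}}(s)=y^{\rho}(-s)$, and $\widetilde{y}^{\Delta}(s)=-y^{\nabla}(-s)$.

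Second, I define the dual data $\widetilde{L}(s,u,v,w):=L(-s,u,-v,w)$ and $\widetilde{g}(s,u,v):=-g(-s,u,-v)$, chosen so that Caputo's rule $\int_{a}^{b}f(t)\nabla t=\int_{-b}^{-a}f(-s)\Delta s$ converts $\mathcal{L}(y)$ into the delta functional $\int_{-b}^{-a}\widetilde{L}(s,\widetilde{y}^{\widetilde{\sigma}}(s),\widetilde{y}^{\Delta}(s),\widetilde{z}(s))\Delta s$ on $\widetilde{\mathbb{T}}$, where $\widetilde{z}(s)=\int_{-a}^{s}\widetilde{g}(\tau,\widetilde{y}^{\widetilde{\sigma}}(\tau),\widetilde{y}^{\Delta}(\tau))\Delta\tau=z(-s)$. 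The boundary conditions transfer to $\widetilde{y}(-b)=\beta$, $\widetilde{y}(-a)=\alpha$, and the nabla-side regularity hypotheses listed in Subsection~\ref{duality} are precisely the assumptions needed so that $\widetilde{y}$, $\widetilde{L}$, $\widetilde{g}$ satisfy the corresponding delta-side hypotheses from Section~2. Consequently, $y_{\ast}$ extremizes the nabla problem on $\mathbb{T}$ if and only if $\widetilde{y_{\ast}}$ extremizes the dual delta problem on $\widetilde{\mathbb{T}}$.

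Third, I apply Theorem~\ref{E-L} to $\widetilde{y_{\ast}}$ on $[-b,-a]^{\kappa}\subset\widetilde{\mathbb{T}}$, obtaining the delta Euler--Lagrange equation for $\widetilde{L}$ and $\widetilde{g}$ at each $s\in[-b,-a]^{\kappa}$. To return to $\mathbb{T}$ I note that the chain rule gives $\partial_{2}\widetilde{L}=\partial_{2}L$, $\partial_{3}\widetilde{L}=-\partial_{3}L$, $\partial_{4}\widetilde{L}=\partial_{4}L$, and analogously $\partial_{2}\widetilde{g}=-\partial_{2}g$, $\partial_{3}\widetilde{g}=\partial_{3}g$ at the transformed arguments; with $t=-s$, the outer operator $\Delta/\Delta s$ corresponds to $-\nabla/\nabla t$, while Caputo's rule converts $\int_{\widetilde{\sigma}(s)}^{-a}(\cdot)\Delta\tau$ into a nabla integral on $[\rho(t),b]$ with the integrand pulled back. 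Collecting all sign contributions recovers equation~(\ref{nablaELequation}) for every $t\in[a,b]_{\kappa}$.

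The main obstacle is the careful bookkeeping of signs: each of the substitutions $v\mapsto-v$ in $\widetilde{L}$ and $\widetilde{g}$, the correspondence $\Delta\leftrightarrow-\nabla$ for outer derivatives, and the orientation reversal of integration limits in Caputo's rule contributes a sign, and one must verify that these signs combine so as to leave each of the four terms in~(\ref{nablaELequation}) with the stated sign. An alternative, entirely self-contained route would be to retrace the proof of Theorem~\ref{E-L} line by line with $\Delta$ replaced by $\nabla$, $\sigma$ by $\rho$, $C_{rd}$ by $C_{ld}$, and $\eta^{\sigma}$ by $\eta^{\rho}$ throughout, invoking the nabla integration-by-parts formula and the nabla analogue of Lemma~\ref{Fund. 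Lemma}; this requires no new ideas but a complete rewriting.
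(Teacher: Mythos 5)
Your overall strategy (dualize, apply Theorem~\ref{E-L}, dualize back) is the one the paper gestures at, but as written it has a genuine gap at its central step: Caputo's duality sends the \emph{forward} nabla problem to a \emph{backward} delta problem, which is not the problem that Theorem~\ref{E-L} treats. Your own formula already shows this: on the dual interval $[-b,-a]$ you obtain $\widetilde{z}(s)=\int_{-a}^{s}\widetilde{g}\,\Delta\tau$, an indefinite integral anchored at the \emph{right} endpoint $-a$, whereas Theorem~\ref{E-L} requires $z$ to be the delta integral from the \emph{left} endpoint of the interval. One cannot repair this by absorbing the difference into a constant, since $\int_{-a}^{s}\widetilde g\,\Delta\tau=\int_{-b}^{s}\widetilde g\,\Delta\tau-\int_{-b}^{-a}\widetilde g\,\Delta\tau$ and the subtracted term depends on the whole trajectory $\widetilde{y}$, so its variation contributes additional terms. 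A symptom of the mismatch is your claim that $\int_{\widetilde{\sigma}(s)}^{-a}(\cdot)\,\Delta\tau$ pulls back to a nabla integral over $[\rho(t),b]$: by Caputo's rule it actually pulls back to $\int_{a}^{\rho(t)}(\cdot)\,\nabla\tau$, which would produce an Euler--Lagrange equation carrying $\int_{a}^{\rho(t)}\partial_4 L\,\nabla\tau$ --- the backward nabla version, not (\ref{nablaELequation}). The stated theorem is nevertheless true; your two slips happen to compensate, but the derivation as written does not establish it.

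To close the gap there are (at least) three honest options. (i) The route the paper explicitly offers: specialize Theorem~\ref{nabla-thm:abn} with $F\equiv 0$ and $\gamma=0$ (though a proof of that theorem faces the same duality issue). (ii) Your own ``alternative route'': rerun the proof of Theorem~\ref{E-L} verbatim with $\Delta\mapsto\nabla$, $\sigma\mapsto\rho$, $\eta^{\sigma}\mapsto\eta^{\rho}$, using the nabla integration by parts formula $\int_a^b f\,\eta^\nabla\,\nabla t=\bigl[f\eta\bigr]_a^b-\int_a^b f^\nabla\eta^\rho\,\nabla t$ and the nabla analogue of Lemma~\ref{Fund. Lemma}; this is the cleanest path and is really a mirror argument rather than a duality argument. (iii) Keep duality, but first prove the backward-delta analogue of Theorem~\ref{E-L}, with $z(t)=\int_b^{t} g\,\Delta\tau$, whose Euler--Lagrange equation carries $-\int_{a}^{\sigma(t)}\partial_4 L\,\Delta\tau$ in place of $+\int_{\sigma(t)}^{b}\partial_4 L\,\Delta\tau$; dualizing \emph{that} statement, with your (correct) sign conventions for $\widetilde{L}$ and $\widetilde{g}$, does yield (\ref{nablaELequation}) with all four terms carrying the stated signs.
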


\begin{Remark}
As a corollary of Theorem~\ref{nabla-E-L}
we obtain the Euler--Lagrange equation for the basic problem
of the calculus of variations on nabla calculus
\cite{MartinsTorres} (see also \cite{Atici}).
In  that case $\partial L_4=0$ and one obtains that
$$
\partial_{2}L(t,y^{\rho}(t),y^\nabla(t))
- \frac{\nabla}{\nabla t}\partial_{3}L(t,y^{\rho}(t),y^\nabla(t))=0
$$
for all $t\in\left[  a,b\right]_\kappa$.
\end{Remark}

\begin{Remark} Theorem~\ref{nabla-E-L} gives the Euler--Lagrange equation in the
nabla--differential form. The
Euler--Lagrange equation in the nabla-integral form to
problem (\ref{nabla-functional 1})--(\ref{nabla-boundary conditions 1}) is
$$
\displaystyle \partial_{3}L \lceil y \rfloor(t)
+ \partial_3 g\langle y \rangle(t)
\cdot \int_{\rho(t)}^b \partial_4 L \lceil y \rfloor(\tau)\Delta\tau
+ \int_t^b \Big(\partial_2 L \lceil y \rfloor(s)+
\displaystyle \partial_2 g\langle y \rangle(s)
\cdot \int_{\rho(s)}^b \partial_{4} L \lceil y \rfloor(\tau) \Delta\tau\Big)\Delta s=const.
$$
\end{Remark}

Applying the duality arguments of Caputo
to Theorem~\ref{Theorem natural boundary conditions}
the following result is obtained.

\begin{Theorem}[Natural boundary conditions to (\ref{nabla-functional 1})]
Suppose that $y_{\ast}$ is  a local minimizer (resp. local maximizer) to problem
(\ref{nabla-functional 1}).
Then $y_{\ast}$ satisfies
the Euler--Lagrange equation
(\ref{nablaELequation}). Moreover,
\begin{enumerate}

\item if $y(a)$ is free, then the natural boundary condition
\begin{equation*}
\partial_3 L\lceil y_\ast\rfloor(a)= - \partial_3 g\langle y_\ast\rangle(a)
 \cdot \int_{\rho(a)}^b \partial_{4}L\lceil y_\ast\rfloor(\tau)\Delta\tau
\end{equation*}
holds;
\item if $y(b)$ is free, then the natural boundary condition
\begin{equation*}
\partial_3 L\lceil y_\ast\rfloor(b)=- \partial_3 g\langle y_\ast\rangle(b)
\cdot \int_{\rho(b)}^b \partial_{4}L\lceil y_\ast\rfloor(\tau)\Delta\tau
\end{equation*}
holds.
\end{enumerate}
\end{Theorem}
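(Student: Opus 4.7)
My plan is to derive this theorem as a direct consequence of Theorem~\ref{Theorem natural boundary conditions} via the duality arguments of Caputo~\cite{Caputo}, in exact parallel with the way Theorem~\ref{nabla-thm:abn} and Theorem~\ref{nabla-E-L} have already been obtained in this paper. Caputo's duality associates to $\mathbb{T}$ a reversed time scale $\mathbb{T}^{\ast}$ under which the operators correspond as $\nabla \leftrightarrow \Delta$, $\rho \leftrightarrow \sigma$, $C^{1}_{ld} \leftrightarrow C^{1}_{rd}$, and a nabla integral $\int_{a}^{b} f(t)\,\nabla t$ on $\mathbb{T}$ becomes a delta integral of the dualized integrand on $\mathbb{T}^{\ast}$.

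The concrete steps are: (i) dualize the problem~(\ref{nabla-functional 1}), including the inner integral defining $z(t)$, to the delta problem~(\ref{functional}) on $\mathbb{T}^{\ast}$, checking that standing assumptions~1--5, the definition of local extremizer, and the free-endpoint hypotheses at $a$ and $b$ in $\mathbb{T}$ translate correctly to the image points of $\mathbb{T}^{\ast}$; (ii) apply Theorem~\ref{Theorem natural boundary conditions} on $\mathbb{T}^{\ast}$ to obtain simultaneously the Euler--Lagrange equation~(\ref{E-L equation}) and, according to which endpoint is free, either~(\ref{a}) or~(\ref{b}); (iii) translate the conclusions back to $\mathbb{T}$ using the dictionary above together with the identifications $[\cdot] \leftrightarrow \lceil\cdot\rfloor$ and $\{\cdot\}\leftrightarrow \langle\cdot\rangle$. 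The Euler--Lagrange part of the conclusion reproduces~(\ref{nablaELequation}), already recorded as Theorem~\ref{nabla-E-L}, and the boundary terms become the two claimed conditions.

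The main obstacle lies in step (iii): the reflection underlying duality exchanges the roles of $a$ and $b$, so one must carefully verify that the natural boundary condition~(\ref{a}) at the free left endpoint on $\mathbb{T}^{\ast}$ dualizes to the stated condition at the appropriate endpoint of $\mathbb{T}$ (and likewise for~(\ref{b})), and that the limits $\int_{b}^{\sigma(\cdot)}\partial_4 L\,\Delta\tau$ appearing in the delta conditions translate into $\int_{\rho(\cdot)}^{b}\partial_4 L\,\nabla\tau$ with exactly the signs stated. A direct alternative is also available: mimic the proof of Theorem~\ref{Theorem natural boundary conditions} verbatim with $\Delta$ replaced by $\nabla$ and $\sigma$ by $\rho$ throughout, taking $\eta \in C^{1}_{ld}(\mathbb{T},\mathbb{R})$ with only the fixed endpoint(s) constrained, forming $\phi(\epsilon)=\mathcal{L}(y_{\ast}+\epsilon\eta)$, computing $\phi'(0)$ using the nabla integration-by-parts formula of~\cite{Bohner-Peterson2} on each of the three terms (exactly as in the proof of Theorem~\ref{E-L}), using~(\ref{nablaELequation}) to cancel the bulk integrand, and reading off each boundary-term equation from the arbitrariness of $\eta(a)$ or $\eta(b)$ via the nabla analogue of Lemma~\ref{Fund. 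Lemma}.
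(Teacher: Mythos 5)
Your overall strategy (deduce the nabla theorem from Theorem~\ref{Theorem natural boundary conditions} by Caputo duality) is exactly what the paper does --- the paper gives no computation at all, only the one-line assertion that duality applies. However, the concrete reflection-based plan you lay out in steps (i)--(iii) has a gap at step (i) that is more serious than the endpoint exchange you flag as ``the main obstacle''. Under the reflection $t\mapsto -t$, the inner integral $z(t)=\int_a^t g\left(\tau,y^{\rho}(\tau),y^{\nabla}(\tau)\right)\nabla\tau$ becomes a delta integral running from the variable point up to the \emph{right} endpoint $-a$ of the reversed time scale. Hence the dualized problem is \emph{not} an instance of (\ref{functional}), whose $z$ is anchored at the left endpoint, and Theorem~\ref{Theorem natural boundary conditions} cannot be invoked for it. This is not a cosmetic issue: if one ignores the anchoring and dualizes (\ref{a})--(\ref{b}) literally, one obtains at a free left endpoint the condition $\partial_3 L\lceil y_\ast\rfloor(a)=\partial_3 g\langle y_\ast\rangle(a)\cdot\int_{\rho(a)}^{a}\partial_4 L\lceil y_\ast\rfloor(\tau)\,\nabla\tau$, which is different from the stated $\partial_3 L\lceil y_\ast\rfloor(a)=-\,\partial_3 g\langle y_\ast\rangle(a)\cdot\int_{\rho(a)}^{b}\partial_4 L\lceil y_\ast\rfloor(\tau)\,\nabla\tau$. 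To make the reflection argument honest you would first have to prove the delta natural-boundary-condition theorem for the right-anchored variant $z(t)=\int_t^b g\,\Delta\tau$ (whose conditions involve $\int_a^{\sigma(a)}$ and $\int_a^{\sigma(b)}$), and only then dualize; that variant is not in the paper.

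By contrast, the ``direct alternative'' you mention at the end is sound and is in effect what the authors mean by duality here: repeat the proof of Theorem~\ref{Theorem natural boundary conditions} verbatim with $\Delta\to\nabla$, $\sigma\to\rho$, rd $\to$ ld, using the nabla integration-by-parts and product rules. Carrying this out, the coefficient of $\eta(a)$ in $\phi'(0)$ is $-\partial_3 L\lceil y_\ast\rfloor(a)-\partial_3 g\langle y_\ast\rangle(a)\cdot\int_{\rho(a)}^{b}\partial_4 L\lceil y_\ast\rfloor(\tau)\,\nabla\tau$ and that of $\eta(b)$ is $\partial_3 L\lceil y_\ast\rfloor(b)+\partial_3 g\langle y_\ast\rangle(b)\cdot\int_{\rho(b)}^{b}\partial_4 L\lceil y_\ast\rfloor(\tau)\,\nabla\tau$, which give precisely the two stated conditions (note, incidentally, that the $\Delta\tau$'s in the statement should read $\nabla\tau$). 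So you should promote your fallback to the main argument, or else supply the right-anchored delta theorem before appealing to reflection.
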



\section{Applications}
\label{subsection applications}

From now on we assume that $\mathbb{T}$ satisfies the following condition $(H)$:
\begin{description}
\item[$(H)$] \qquad for each $t \in\mathbb{T}$, $\rho(t)= a_1 t +a_0$ for some
$a_1\in\mathbb{R}^+$ and $a_0\in\mathbb{R}$.
\end{description}

\begin{Remark}
Note that condition $(H)$ implies that $\rho$ is nabla
differentiable and
$\rho^{\nabla}(t)=a_1$, $t \in \mathbb{T}_\kappa$. Also note that
condition $(H)$ englobes the differential calculus
($\mathbb{T}=\mathbb{R}$, $a_1=1$, $a_0=0$), the difference
calculus ($\mathbb{T}=\mathbb{Z}$, $a_1=1$, $a_0=-1$),
the $h$-calculus ($\mathbb{T}=h \mathbb{Z}$, for some $h>0$, $a_1=1$, $a_0=-h$),
and the
q-calculus ($\mathbb{T}=q^{\mathbb{N}_0}$ for some $q>1$, $a_1=\frac{1}{q}$, $a_0=0$).
\end{Remark}

The following result illustrates an application of Theorem~\ref{nabla-thm:abn}.

\begin{Proposition}
\label{Propo 1}
Suppose that $\mathbb{T}$ satisfies condition $(H)$,
$\xi$ is a real parameter, and $k \in \mathbb{R}$ is a given constant.
Suppose that $f:\mathbb{R}^2 \rightarrow\mathbb{R}$
is a $C^2$ function that satisfies the conditions:
\begin{enumerate}
\item[(A1)] $\partial_1 f(y^\rho (t), \xi)\neq - k a_1$ for all $t$
in some non-degenerate interval $I \subseteq [a,b]$,
for all $\xi$ and for all admissible function $y$;

\item[(A2)] $\partial^{2}_{1,1} f(y^\rho (t), \xi)\neq 0$ for all $t$
in some non-degenerate interval $I \subseteq [a,b]$,
for all $\xi$ and for all admissible function $y$.
\end{enumerate}
Consider
$$
L(t,y,v,z)=f(y,\xi)+kz, \ \ g(t,y,v)=v \ \  \mbox{and}  \ \ F(t,y,v,z)=y.
$$
If $y_\ast$ is a solution to problem
(\ref{nabla-functional 1})--(\ref{nabla-isoperimetric 1}),
then $y_\ast(t)=\alpha$, $t\in [a,b]^\kappa$.
\end{Proposition}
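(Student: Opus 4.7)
The plan is to apply Theorem~\ref{nabla-thm:abn} directly to this choice of data, compute every piece of the Euler--Lagrange equation explicitly using the simple form of $L$, $g$, $F$, exploit condition $(H)$ to evaluate the nabla derivative of the inner integral, and then use (A1)--(A2) to force $y_\ast$ to be constant.

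First I would read off the partial derivatives: for $L(t,y,v,z)=f(y,\xi)+kz$ we have $\partial_2 L=\partial_1 f(y,\xi)$, $\partial_3 L=0$, $\partial_4 L=k$; for $F(t,y,v,z)=y$ we have $\partial_2 F=1$, $\partial_3 F=0$, $\partial_4 F=0$; and for $g(t,y,v)=v$ we have $\partial_2 g=0$, $\partial_3 g=1$. Hence $H=\lambda_0 L-\lambda F$ satisfies $\partial_2 H=\lambda_0\,\partial_1 f(y,\xi)-\lambda$, $\partial_3 H=0$, $\partial_4 H=\lambda_0 k$. Substituting into the nabla Euler--Lagrange equation of Theorem~\ref{nabla-thm:abn}, the term with $\partial_2 g$ vanishes, the term $\frac{\nabla}{\nabla t}\partial_3 H$ vanishes, and the last term reduces to
$$
\frac{\nabla}{\nabla t}\!\left(\int_{\rho(t)}^{b}\lambda_0 k\,\nabla\tau\right)
=\frac{\nabla}{\nabla t}\bigl(\lambda_0 k(b-\rho(t))\bigr)
=-\lambda_0 k\,\rho^{\nabla}(t)=-\lambda_0 k a_1,
$$
where I use condition $(H)$ in the last step. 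The Euler--Lagrange equation therefore collapses to the algebraic identity
$$
\lambda_0\bigl(\partial_1 f(y_\ast^{\rho}(t),\xi)+k a_1\bigr)=\lambda
\qquad\text{for all }t\in[a,b]_\kappa.
$$

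Next I would rule out the abnormal case: if $\lambda_0=0$ then $\lambda=0$, contradicting $(\lambda_0,\lambda)\neq(0,0)$. Hence $\lambda_0\neq 0$ and I may normalise $\lambda_0=1$, leaving
$$
\partial_1 f(y_\ast^{\rho}(t),\xi)=\lambda-k a_1\qquad\text{for all }t\in[a,b]_\kappa.
$$
Assumption (A1) ensures that the right-hand side is nonzero, so the equation is consistent; more importantly, its right-hand side is constant in $t$. Now assumption (A2) guarantees that $u\mapsto \partial_1 f(u,\xi)$ is strictly monotonic (hence injective) on the relevant range, so $t\mapsto y_\ast^{\rho}(t)$ is constant on the non-degenerate interval $I$ provided by (A2). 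Since $\rho(t)=a_1 t+a_0$ is continuous and $y_\ast\in C^1_{ld}$, this forces $y_\ast$ to be constant on $\rho(I)$, and then by connectedness of $[a,b]$ and continuity on $[a,b]^\kappa$.

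Finally I would evaluate the constant: the boundary condition $y_\ast(a)=\alpha$ identifies the value, giving $y_\ast(t)=\alpha$ for all $t\in[a,b]^\kappa$. The main obstacle is the bookkeeping of the nabla derivative of the inner integral; once condition $(H)$ is used to replace $\rho^{\nabla}(t)$ by the constant $a_1$, the Euler--Lagrange equation degenerates to a pointwise algebraic condition, and (A1)--(A2) do the rest. (A subtlety worth noting, but not a real obstruction, is that consistency of the conclusion with $y_\ast(b)=\beta$ implicitly requires $\alpha=\beta$; otherwise the proposition says there is no admissible extremizer at all.)
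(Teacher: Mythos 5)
Your proposal is correct and follows essentially the same route as the paper's own proof: substitute the explicit partial derivatives of $L$, $F$, $g$ into the Euler--Lagrange equation of Theorem~\ref{nabla-thm:abn}, use condition $(H)$ to reduce it to the algebraic identity $\lambda_0\bigl(\partial_1 f(y_\ast^{\rho}(t),\xi)+ka_1\bigr)=\lambda$, rule out $\lambda_0=0$, and invoke (A2) to force $y_\ast^\rho$ to be constant, the boundary condition $y_\ast(a)=\alpha$ then fixing the constant. The extra observations you add (injectivity of $u\mapsto\partial_1 f(u,\xi)$ from (A2), and the compatibility remark about $\alpha=\beta$) are consistent with the remarks the paper itself makes after the proposition.
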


\begin{proof}
Suppose that $y_\ast$ is an extremizer to problem
(\ref{nabla-functional 1})--(\ref{nabla-isoperimetric 1}).
By Theorem~\ref{nabla-thm:abn} there exist two constants $\lambda_0$ and  $\lambda$,
not both zero, such that $y_\ast$ satisfies the equation
\begin{equation}
\label{nabla-abn}
\displaystyle \partial_{2}H  \lceil y \rfloor (t)
- \frac{\nabla}{\nabla t}\partial_{3} H \lceil y \rfloor  (t)
+\displaystyle \partial_2 g \langle y \rangle (t) \cdot
\int_{\rho(t)}^b \partial_4 H \lceil y \rfloor  (\tau)\nabla\tau
- \displaystyle \frac{\nabla}{\nabla t}\left(\partial_3 g \langle y \rangle(t)
\cdot \int_{\rho(t)}^b \partial_4 H \lceil y \rfloor (\tau) \nabla\tau\right)=0
\end{equation}
for all $t \in [a,b]_\kappa$, where $H = \lambda_0 L-\lambda F$.
Since
$$
\partial_2 H=\lambda_0 \partial_1 f - \lambda, \ \ \partial_3 H=0,
\ \ \partial_4 H=\lambda_0 k, \ \ \partial_2 g=0 \ \ \mbox{and} \ \ \partial_3 g=1,
$$
then equation (\ref{nabla-abn}) reduces to
\begin{equation}
\label{nabla-abn1}
\lambda_0 \Bigl(\partial_1 f(y_\ast^\rho (t), \xi)
+ k a_1 \Bigr) = \lambda , \ \ \ t \in [a,b]_\kappa.
\end{equation}
Note that if $\lambda_0=0$, then $\lambda=0$ violates the condition that
$\lambda_0$ and $\lambda$ do not vanish simultaneously. If $\lambda=0$,
then equation (\ref{nabla-abn1}) reduces to
$\lambda_0 \Bigl(\partial_1 f(y_\ast^\rho (t), \xi) + k a_1 \Bigr)=0$.
By assumption $(A1)$ we conclude that $\lambda_0=0$, which again contradicts
the fact that $\lambda_0$ and $\lambda$ are not both zero. Consequently,
we can assume, without loss of generality, that $\lambda_0=1$.
Hence, equation (\ref{nabla-abn1}) takes the form
$$
\partial_1 f(y_{\ast}^\rho (t), \xi)=\lambda-k a_1,
\quad t \in [a,b]_\kappa.
$$
By assumption $(A2)$ we conclude that
$$
y_\ast^\rho(t)=const, \quad t\in [a,b]_\kappa.
$$
Since $y(a)=\alpha$, we obtain that $y_\ast(t)=\alpha$
for any $t\in [a,b]^\kappa$.
\end{proof}

Observe that the solution to the class of problems 
considered in Proposition~\ref{Propo 1} is a
constant function that depends only on the boundary
conditions (and the isoperimetric constraint) but not
explicitly on the integrand function and its parameters.

\begin{Remark}
By the isoperimetric constraint (\ref{nabla-isoperimetric 1}),
a necessary condition for the problem of Proposition~\ref{Propo 1}
to have a solution is that  $\alpha=\displaystyle\frac{\gamma}{b-a}$.
\end{Remark}

\begin{Remark}
Let $b$ be a left dense point. Then, by the boundary conditions
(\ref{nabla-boundary conditions 1}), a necessary condition
for the problem of Proposition~\ref{Propo 1}
to have solution is that $\alpha=\beta$.
\end{Remark}

\begin{Remark}[\textrm{cf.} \cite{Caputo2009}]
Let $\mathbb{T}=\mathbb{R}$. Suppose that
$\alpha=\displaystyle\frac{\gamma}{b-a}=\beta$.
\begin{enumerate}
\item If $\partial^{2}_{1,1} f(y (t), \xi)>0$ for all $t \in [a,b]$,
for all $\xi$ and for all admissible function $y$, then problem
(\ref{nabla-functional 1})--(\ref{nabla-isoperimetric 1})
has a unique minimizer.
\item If $\partial^{2}_{1,1} f(y (t), \xi)<0$ for all $t \in [a,b]$,
for all $\xi$ and for all admissible function $y$, then problem
(\ref{nabla-functional 1})--(\ref{nabla-isoperimetric 1})
has a unique maximizer.
\end{enumerate}
\end{Remark}

We end the paper with an example of application
of the \emph{nabla version} of Theorem~\ref{normalcase}.

\begin{Ex}
Let $q:[a,b] \rightarrow \mathbb{R}$ be a continuous function
and $y^{\nabla^2}:= (y^{\nabla})^{\nabla}$.
Suppose that $y_\ast \in C^2_{ld}$ is an extremizer for
\begin{equation*}
\mathcal{L}(y) = \int_{a}^{b} \Bigl((y^\nabla)^2(t)-q(t)(y^\rho)^2(t)
+ 2\int_a^t y^\nabla (\tau)\nabla\tau \Bigr)\nabla t
\end{equation*}
subject to the boundary conditions
\begin{equation*}
y(a)=0 \quad \mbox{ and } \quad y(b)=0
\end{equation*}
and the delta integral constraint
\begin{equation}
\label{isoperimetric 1Ex}
\mathcal{J}(y) = \int_{a}^{b}(y^\rho)^2(t)\nabla t  =1.
\end{equation}
Note that any extremal to $\mathcal{J}$ does not satisfy the isoperimetric constraint
(\ref{isoperimetric 1Ex}). Hence, this problem has no abnormal extremizers and,
by the nabla version of Theorem~\ref{normalcase}, there exists $\lambda \in \mathbb{R}$
such that $y_\ast$ satisfies the equation
\begin{equation}
\label{eq.Ex}
\displaystyle \partial_{2}H \lceil y \rfloor(t)
- \frac{\nabla}{\nabla t}\partial_{3} H \lceil y \rfloor(t)
+\displaystyle \partial_2 g\langle y\rangle(t) \cdot \int_{\rho(t)}^b \partial_4 H \lceil y \rfloor(\tau)\nabla\tau
- \displaystyle \frac{\nabla}{\nabla t}\left(\partial_3 g \langle y \rangle(t)
\cdot \int_{\rho(t)}^b \partial_4 H \lceil y \rfloor(\tau) \nabla\tau\right)=0
\end{equation}
for all $t \in [a,b]_\kappa$, where $H = L-\lambda F$ and
$$
L(t,y,v,z)= v^2 - q(t) y^2 + 2z, \quad
g(t,y,v)=v, \quad  \mbox{ and }  \quad F(t,y,v,z)=y^2.
$$
Since
$$
\partial_2 H=-2qy - 2\lambda y, \ \ \partial_3 H=2v, \ \ \partial_4 H=2,
\ \ \partial_2 g=0, \ \ \mbox{and} \ \ \partial_3 g=1,
$$
then equation (\ref{eq.Ex}) reduces to
\begin{equation}
\label{Sturm-eq}
y^{\nabla^2}(t)+q(t)y^\rho(t)+\lambda y^\rho (t)
= \partial_4 H \cdot \frac{a_1}{2}, \ \ t \in [a,b]_{\kappa^2}.
\end{equation}
Note that in the basic problem of calculus of variations on time scales,
$\partial_4 H=0$, and we obtain the nabla version of the well known Sturm--Liouville eigenvalue equation:
$$
y^{\nabla^2}(t)+q(t)y^\rho(t)+\lambda y^\rho (t)=0, \ \ t \in [a,b]_{\kappa^2}
$$
(see \cite{Agarwal:Bohner,FerreiraTorres}). The study of solutions to equation
(\ref{Sturm-eq}) in the case $\partial_4 H \ne 0$ is an interesting open problem.
\end{Ex}


\section*{Acknowledgments}

The authors are grateful to the support
of the \emph{Portuguese Foundation for Science and Technology} (FCT)
through the \emph{Center for Research and Development
in Mathematics and Applications} (CIDMA).


{\small

}


\end{document}